\newcommand{\xlabel}[1]{
                        \label{#1}
                        \ifthenelse{\boolean{xlabels}}
                                   {\marginpar[\hfill{\tiny #1}]{{\tiny #1}}}
                                   {}
                       }
\newcommand{\ZZ}{\mathbb{Z}}
\newcommand{\CC}{\mathbb{C}}
\newcommand{\QQ}{\mathbb{Q}}
\newcommand{\PP}{\mathbb{P}}
\newcommand{\FF}{\mathbb{F}}
\newcommand{\sV}{{\mathcal V}}
\newcommand{\problem}[1]
           {\ifthenelse{\boolean{probleme}}
                       {{\color{red}(PROBLEM: #1)}}
                       {}
           }
\newcommand{\zukunft}[1]
           {\ifthenelse{\boolean{zukuenftiges}}
                       {{\bf(AUSBAUM\"OGLICHKEIT: #1)\bf}}
                       {}
           }
\newcommand{\extra}[1]
           {\ifthenelse{\boolean{extras}}
                       {{\bf EXTRA #1 EXTRA\bf}}
                       {}
           }
\newcommand{\ignore}[1]
           {\ifthenelse{\boolean{ignore}}
                       {{\bf IGNORE #1 IGNORE\bf}}
                       {}
           }
\DeclareMathOperator{\Img}{Im}
\DeclareMathOperator{\sing}{sing}
\theoremstyle{plain}
\newtheorem{thm}{Theorem}%[subsection]
\newtheorem{lem}[thm]{Lemma}
\newtheorem{prop}[thm]{Proposition}
\numberwithin{thm}{subsection} 
\newtheorem*{thm*}{Theorem}
\newtheorem*{conj*}{Conjecture}
\newtheorem*{verm*}{Vermutung}
\theoremstyle{definition}
\newtheorem{defn}[thm]{Definition}
\newtheorem{rem}[thm]{Remark}
\newtheorem{example}[thm]{Example}
\newtheorem{constr}[thm]{Construction}
\numberwithin{equation}{section}
\newcommand{\nosubsections}{\renewcommand{\thethm}{\thesection.\arabic{thm}}
                            \setcounter{thm}{0}
                           }
\newcommand{\cref}[3]{(\ref{#1}, #2 \ref{#3})}
\date{\today}
\theoremstyle{definition}
\DeclareMathOperator{\dx}{dx}
\DeclareMathOperator{\dy}{dy}
\begin{document}

\title
{New Solutions to the Poincar\'e Center Problem in Degree 3}

\author[von Bothmer]{Hans-Christian von Bothmer}
\address{Hans-Christian von Bothmer, Fachbereich Mathematik der Universit\"at Hamburg\\
Bundesstra\ss e 55\\
20146 Hamburg, Germany}
\email{hans.christian.v.bothmer@uni-hamburg.de}
%\author{Jakob Kr\"oker}

\begin{abstract}
Let $\omega$ be a plane autonomous system and $C$ its configuration of algebraic integral curves. If the singularities of $C$ are quasi-homogeneous we we present new criteria that guarantee Darboux integrability. We use this to construct previously unknown components of the center variety.
\end{abstract}

\maketitle

\newcommand{\dual}{^*}
\newcommand{\barf}{\bar{f}}
\newcommand{\barg}{\bar{g}}
\newcommand{\barh}{\bar{h}}
\newcommand{\bara}{\bar{a}}
\newcommand{\barJ}{\bar{J}}
\newcommand{\barN}{\bar{N}}
\newcommand{\barH}{\bar{H}}

\newcommand{\impsi}{\overline{\Img \psi}}
\newcommand{\imphi}{\overline{\Img \phi}}
\newcommand{\semi}{Theorem \ref{tSemi}}

\newcommand{\ZZx}{\ZZ[x_1,\dots,x_n]}
\newcommand{\QQx}{\QQ[x_1,\dots,x_n]}
\newcommand{\CCx}{\CC[x_1,\dots,x_n]}
\newcommand{\FFp}{\FF_p}
\newcommand{\FFx}{\FFp[x_1,\dots,x_n]}
\renewcommand{\AA}{\mathbb{A}}

\newcommand{\zoladek}{\.Zo\l\c adek\,}
\newcommand{\zoladeks}{\.Zo\l\c adek's\,}

%%%%%%%%%%%%%%%%%%%%%%%%%%%%%%%%%%%%%%%%%%%%
\section{Introduction}
%%%%%%%%%%%%%%%%%%%%%%%%%%%%%%%%%%%%%%%%%%%%
\nosubsections
In 1885 Poincar\'e  \cite{Poincare1885}, asked under what conditions the differential equation
\[
y' = - \frac{x + p(x,y)}{y+q(x,y)} =: - \frac{P(x,y)}{Q(x,y)}
\]
with convergent power series $p(x,y)$ and $q(x,y)$ starting with quadratic terms, 
has stable solutions in a neighborhood of the equilibrium point
$(x,y)=(0,0)$. That is, all nearby solutions of the
corresponding planar autonomous system
\begin{align*}
	\dot{x} &= y + q(x,y) = Q(x,y)\\
	\dot{y} &= -x - p(x,y) = -P(x,y)
\end{align*}
are closed curves around the origin. Such a differential equation is said to have a 
{\sl center} at $(0,0)$.

Poincar\'e showed that one can iteratively find a formal power series
$F = x^2+y^2+f_3(x,y)+f_4(x,y)+\dots$ such that
\[
	\det \begin{pmatrix} F_x & F_y \\ P & Q \end{pmatrix} = \sum_{j=1}^\infty s_j(x^{2j+2}+y^{2j+2})
\]
with $s_j$ rational polynomials in the coefficients of $P$ and $Q$.
If all $s_j$ vanish, and $F$ is convergent then $F$ is a constant of motion, i.e. its gradient field
satisfies $Pdx+Qdy=0$. Since $F$ starts with $x^2+y^2$, this shows that close to the origin all integral curves are closed and the system is stable. Therefore the $s_j$'s are
called the {\sl focal values} of $Pdx+Qdy$. Often also the notation $\eta_{2j} := s_j$ is used, and the $\eta_i$ are called {\sl Lyapunov quantities}.

Poincar\'e also showed that the existence of an analytic constant of motion implies the vanishing of all focal values.
Later Frommer \cite{Frommer} proved that the systems above are stable if and only if all focal values vanish even without the assumption of convergence of $F$. (Frommer's proof contains a gap which can be closed \cite{vWahlGap}.)

Unfortunately, it is in generally impossible to check Poincar\'e's condition for a given differential equation because
there are infinitely many focal values. In the case where $P$ and $Q$ are polynomials of degree
at most $d$, the $s_j$ are polynomials in finitely many unknowns. Hilbert's Basis Theorem then implies
that the ideal $I_\infty = (s_1,s_2,\dots)$ is finitely generated and that the solution set is an algebraic variety, the {\sl center variety}.

Poincar\'e was inspired by work of Darboux \cite{Darboux} who showed that the existence of algebraic integral curves often implies the existence of a constant of motion. Such systems are now called {\sl Darboux integrable}. We review Darboux's Theorem in Section \ref{sPrelim}. 

For $d=2$ the center variety has $4$ components and each component is characterized by a certain type of algebraic integral curve configuration:

\begin{enumerate}
\item three lines in general position
\item a line and a conic in general position
\item infinitely many cubics
\item a conic and a cubic in special position.
\end{enumerate}

All these differential forms are Darboux integrable with respect to the given curve configuration. \cite{Dulac}

In degree $3$ a new type of center appears, the rational reversible centers. These were classified by \zoladek \cite{zoladekRational}. On the other hand the classification of Darboux centers in degree $3$ is still open and appears to be very difficult.
So for $d=3$ only partial results are known, for example  \cite{ZoladekRomanovskii} and \cite{ChristopherSubspace}. In \cite{zoladekCorrection} \zoladek gives a list of $52$ families of differential
forms known to have centers. $35$ of these are Darboux integrable. In \cite{survey} we analyze which of \zoladeks families form reduced components of the center variety (22 of the 52). The other ones either give non reduced components or are proper subfamilies of reduced components. 

Computer experiments over finite fields \cite{survey}, \cite{Ste:2011} seem to indicate that \zoladeks list is complete up to codimension $8$ and that, at higher codimension, more than 100 additional components exist.
It is the aim of this paper to present a geometric construction for some of these predicted components and give a rigorous proof of their existence over $\CC$.

Our construction is via curve configurations in special position with simple singularities. This construction consists of two parts:

Firstly, for a reduced, but possibly reducible and singular curve $C$ we consider the vector space of $V_d(C)$ differential forms of degree $d$ having the given curve configuration among their integral curves.  The computation of $V_d(C)$ is called the {\sl inverse problem} and was solved by Christopher, Llibre, Pantazi and Walcher in \cite{inverseProblemWalcherSuggestion}. In Section \ref{sInverse} we recall these results and work out the contribution of the singularities of $C$ to the expected dimension of $V_d(C)$. The case of nodes was already treated in \cite{inverseProblemWalcherSuggestion}. In general, it turns out that each singularity contributes via its {\sl Tjurina number} (with some modifications for points at infinity.) Furthermore, we investigate how $V_d(C)$ behaves in families. A key geometric insight from \cite{inverseProblemWalcherSuggestion}
allows us to describe this explicitly. 

Given a family of differential forms constructed from a family of curve configurations using the methods of Section \ref{sInverse}, our goal is to prove Darboux integrability for each differential form in the family.  For this we extend Darboux's method to include information about the singularities of the curve configurations. More precisely for a differential form $\omega$ with given integral curves $\{C_1,\dots,C_r\}$ we evaluate the cofactors of the integral curves $dK_{C_i}$ and $d\omega$ at the singular points of the configuration. These values were previously studied in \cite{necessaryChavarriga} to rule out the existence of integral curves in certain situations of interest. Here we prove that for simple singularities the ratio $\eta$ of these values depends solely on the type of the singularity {\sl and not on $\omega$}. Using $\eta$ we find special linear combinations of the cofactors and $d\omega$ for which we can apply Darboux' Theorem. These ideas are developed in Section \ref{sDarboux}.
They are the first main contribution of this paper. 

Finally, in Section \ref{sConstruction} we use the above methods to construct new components of the $d=3$ center variety in codimension $9$.  In each case we describe an equi-singular family of curve configurations and then use the singularities to solve the inverse problem and to prove Darboux integrability. Furthermore we compute the tangent space to the center variety in one explicit example and use this to prove that our families are components, i.e. cannot be extended. These new components are the second main contribution of this paper.

We found the special curve configurations used for these constructions by analyzing data obtained from computer experiments over a finite field \cite{survey}, \cite{Ste:2011}. 

The author would like to thank Jaume Llibre and Sebastian Walcher for reading and commenting on an earlier version of this paper, and in particular for pointing out the article \cite{inverseProblemWalcherSuggestion}, which greatly improved and simplified the presentation of the inverse problem in Section \ref{sInverse}.

The author would also like to remember Jakob Kr\"oker. Without his computational work and friendship, this project would have been impossible. 

%%%%%%%%%%%%%%%%%%%%%%%%%%%%%%%%%%%%%%%%%%%%
\section{Preliminaries} \xlabel{sPrelim}
%%%%%%%%%%%%%%%%%%%%%%%%%%%%%%%%%%%%%%%%%%%%
\nosubsections

In this article, we describe a plane autonomous system by a differential form $\omega = P\dx + Q\dy$ where $P, Q \in \CC[x,y]$ are polynomials of degree at most $d$.
\begin{defn}
Let $\omega$ be a differential form, and let $F, \mu \in \CC[[x,y]]$ be power series. If
\[
	dF = \mu \omega
\]
then $F$ is called a {\sl first integral} and $\mu$ an {\sl integrating factor} of $\omega$
\end{defn}

For a given $\omega$ it can often be very difficult to decide, whether a first
integral exists. Darboux realized in 1878 that the existence of algebraic
integral curves can help to answer this question: 

\begin{defn}
Let $C \in \CC[x,y]$ be a polynomial and $\{ C=0 \}$ the plane algebraic curve defined by $C$. The zero locus
$\{C=0\}$ is called an {\sl algebraic integral curve} of a differential form $\omega$ if and only if
\[
	dC \wedge \omega |_C = 0 \iff dC \wedge \omega = C\cdot K_C
\]
In this situation the $2$-form $K_C$ is called the {\sl cofactor} of $C$. In a slight abuse of notation, 
we  also denote the algebraic curve $\{C=0\}$ by the same letter $C$.
\end{defn}

\begin{thm}[Darboux 1878]
Let $\omega$ be a differential form, $C_1,\dots,C_r$ algebraic integral
curves of $\omega$ and $K_1,\dots,K_r$ their cofactors. 
\begin{itemize}
\item If $\sum \alpha_i K_i = -d\omega$ for appropriate $\alpha_i \in \CC$ then 
$\mu = \prod C_i^{\alpha_i}$ is a rational integrating factor of $\omega$.
\item If $\sum \alpha_i K_i = 0$ for appropriate $\alpha_i \in \CC$ then 
$F = \prod C_i^{\alpha_i}$ is a first integral of $\omega$.
\end{itemize}
\end{thm}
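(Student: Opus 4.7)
The plan is to reduce both statements to a single logarithmic derivative computation that exploits the cofactor relation $dC_i \wedge \omega = C_i K_i$. The underlying observation is that a formal product $\prod_i C_i^{\alpha_i}$ satisfies
\[
\frac{d\bigl(\prod_i C_i^{\alpha_i}\bigr)}{\prod_i C_i^{\alpha_i}} = \sum_i \alpha_i \frac{dC_i}{C_i},
\]
and wedging or differentiating this expression brings in the cofactors in a controlled way.

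First I would prove the integrating factor statement. Set $\mu := \prod C_i^{\alpha_i}$ and compute $d(\mu\omega) = d\mu \wedge \omega + \mu\, d\omega$. Substituting the logarithmic derivative of $\mu$ and then replacing each $dC_i \wedge \omega$ by $C_i K_i$ collapses the expression to $\mu\bigl(\sum_i \alpha_i K_i + d\omega\bigr)$. Under the hypothesis this vanishes, so $\mu\omega$ is closed, and Poincar\'e's lemma (applied on a simply connected neighborhood of the origin avoiding the zero loci of the $C_i$, or equivalently in the formal power series ring $\CC[[x,y]]$) yields a local primitive $F$ with $dF = \mu\omega$, which is precisely the definition of an integrating factor.

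For the first integral statement I would take $F := \prod C_i^{\alpha_i}$ and wedge $dF$ with $\omega$. The same logarithmic derivative identity together with the cofactor relation gives
\[
dF \wedge \omega = F \sum_i \alpha_i \frac{dC_i \wedge \omega}{C_i} = F \sum_i \alpha_i K_i,
\]
which vanishes by hypothesis. In two variables the vanishing of $dF \wedge \omega$ is equivalent to $dF$ being proportional to $\omega$ at every point where $\omega \neq 0$, so there exists a factor $\mu$ with $dF = \mu\omega$, and $F$ is a first integral in the sense of the definition.

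I do not expect any serious obstacle: the core of the argument is purely formal, and the only point deserving a cautionary remark is that for non-integer $\alpha_i \in \CC$ the expressions $\mu$ and $F$ are multivalued. This is not a real issue, because the logarithmic derivative $\sum_i \alpha_i\, dC_i/C_i$ is an honest rational $1$-form, and it is this quantity, and not $\mu$ itself, that drives the manipulations above. Reading the conclusions in the local analytic or formal sense used in the preceding definitions therefore suffices.
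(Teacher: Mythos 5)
Your proposal is correct and follows essentially the same route as the paper: the identical logarithmic-derivative computation collapsing $d(\mu\omega)$ to $\mu\bigl(\sum\alpha_i K_i + d\omega\bigr)$ for the integrating factor, and $dF\wedge\omega = F\sum\alpha_i K_i$ for the first integral. Your explicit appeals to the Poincar\'e lemma and the remark on multivaluedness for non-integer $\alpha_i$ merely spell out what the paper leaves implicit in its ``consequently there exists an $F$'' step.
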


\begin{proof}
For the first claim we calculate:
\begin{align*}
	d(\mu \omega) 
	&= d\mu \wedge \omega + \mu d \omega \\
	& = \mu \bigl( \frac{d \mu}{\mu} \wedge \omega + d\omega \bigr) \\
	& = \mu ( d\log \mu \wedge \omega + d\omega) \\
	&= \mu \bigl( \sum \alpha_i d \log C_i \wedge \omega + d \omega \bigr) \\
	&= \mu \bigl( \sum \alpha_i \frac{dC_i}{C_i} \wedge \omega + d \omega \bigr) \\
	&= \mu \bigl( \sum \alpha_i K_i+ d \omega \bigr) \\
	&= 0
\end{align*}
consequently there exists an $F$ with $dF = \mu \omega$.

For the second claim we observe that 
$$
	dF = \mu \omega \iff dF \wedge \omega = 0.
$$
We now compute
\begin{align*}
	dF \wedge \omega 
	&=  F \Bigl(\frac{d F}{F} \wedge \omega \Bigr)\\
	&=  F (d \log F \wedge \omega)\\
	& = F \sum \alpha_i d\log C_i \wedge \omega \\
	& = F \sum \alpha_i K_i\\
	&= 0
\end{align*}
\end{proof}

Sometimes the following trivial observation is useful:

\begin{lem} \xlabel{lUnionOfIntegralCurves}
Let $C, D$ be plane algebraic curves without common components and $\omega$ a differential form
Then $C$ and $D$ are integral curves of $\omega$ if and only if $C\cup D$ is an integral curve of $\omega$.
\end{lem}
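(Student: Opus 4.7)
The plan is to translate the integral curve condition into a polynomial divisibility statement in the UFD $\CC[x,y]$ and then invoke coprimality. The union $C\cup D$ is cut out by the product polynomial $CD$, so the three cofactor relations in play are
\[
	dC\wedge\omega = C\cdot K_C, \qquad dD\wedge\omega = D\cdot K_D, \qquad d(CD)\wedge\omega = CD\cdot K_{CD}.
\]

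For the "only if" direction I would just differentiate the product. Using $d(CD) = D\,dC + C\,dD$ and substituting the hypothesised cofactor equations for $C$ and $D$ gives
\[
	d(CD)\wedge\omega = D(dC\wedge\omega) + C(dD\wedge\omega) = CD\,(K_C+K_D),
\]
so $C\cup D$ is an integral curve with cofactor $K_C+K_D$.

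For the "if" direction, expanding the hypothesised cofactor relation for $CD$ the same way yields
\[
	D\,(dC\wedge\omega) + C\,(dD\wedge\omega) = CD\cdot K_{CD},
\]
which rearranges to $D\,(dC\wedge\omega) = C\bigl(D\cdot K_{CD} - dD\wedge\omega\bigr)$. Identifying a $2$-form on $\AA^2$ with its unique coefficient with respect to $dx\wedge dy$, this says that $C$ divides $D\,(dC\wedge\omega)$ in $\CC[x,y]$. Since $C$ and $D$ share no common irreducible factor, they are coprime in the UFD $\CC[x,y]$, so $C$ must already divide $dC\wedge\omega$; this produces a $2$-form $K_C$ with $dC\wedge\omega = C\cdot K_C$. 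The symmetric argument produces $K_D$.

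The only substantive step is the final divisibility deduction, which is immediate from unique factorisation; this matches the author's description of the lemma as a trivial observation.
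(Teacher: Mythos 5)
Your argument is correct, but it runs along a different track than the paper's. You work purely algebraically: writing the union as the product polynomial $CD$, using $d(CD)=D\,dC+C\,dD$ to get the cofactor identity $d(CD)\wedge\omega = CD(K_C+K_D)$ for the easy direction, and for the converse isolating $D\,(dC\wedge\omega)=C\bigl(DK_{CD}-dD\wedge\omega\bigr)$ and invoking coprimality of $C$ and $D$ in the UFD $\CC[x,y]$ to conclude $C\mid dC\wedge\omega$; both steps are sound, including the final divisibility deduction (no common components gives $\gcd(C,D)=1$, so $C\mid Df$ forces $C\mid f$). The paper instead argues geometrically: the integral-curve condition $dC\wedge\omega|_C=0$ is local and closed, so it can be checked on the dense open subset of each component lying away from $C\cap D$, where $C\cup D$ locally coincides with $C$ or with $D$. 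The paper's proof is shorter and conceptually transparent but leans on the reducedness of the curves (vanishing on the underlying set is only equivalent to divisibility for squarefree equations) and on the reader accepting the locality/closedness of the condition; your proof is more explicit, works verbatim at the level of defining polynomials, and has the added benefit of producing the cofactor formula $K_{C\cup D}=K_C+K_D$, which is exactly the additivity used later in the paper (e.g.\ in Proposition \ref{pInfinityLineZero}).
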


\begin{proof} The integral curve condition can be checked locally, outside the intersection
points of $C$ and $D$. Since it is a closed condition, it suffices to verify the condition on a dense open subset of each component.
\end{proof}

%%%%%%%%%%%%%%%%%%%%%%%%%%%%%%%%%%%%%%%%%%%%
\section{Singularities of Plane Algebraic Curves} \xlabel{sSing}
%%%%%%%%%%%%%%%%%%%%%%%%%%%%%%%%%%%%%%%%%%%%
\nosubsections

We now turn to singular algebraic integral curves. To this end, we recall
some singularity theory of plane algebraic curves, see \cite{singGreuel} for a detailed introduction.

\begin{defn} Let $F \in \CC[[x,y]]$ be the germ of a reduced curve singularity at $0$. Then
\[
	m = \dim \frac{\CC[[x,y]]}{(F_x,F_y)}
\]
is called the {\sl Milnor number} of $F$. Similarly
\[
	t = \dim \frac{\CC[[x,y]]}{(F_x,F_y,F)}
\]
is called the {\sl Tjurina number} of $F$. The Milnor number is a topological invariant of $C$, whereas the Tjurina number is only an analytic invariant. Note that by definition $m\ge t$, with equality holding for quasi-homogeneous singularities.
\end{defn}

%\begin{prop}
%??? Number of conditions ???
%\end{prop}

\begin{example} \xlabel{eSimpleSingularities} The so-called {\sl simple singularities} are:

\begin{center}
\begin{tabular}{|c|c|c|c}
\hline
name & local equation & $m=t=c$ \\ 
\hline
node ($A_1$)		& $x^2-y^2$ & 1  \\
cusp  ($A_2$)		& $x^2-y^3$ & 2  \\
tacnode ($A_3$)	& $x^2-y^4$ & 3  \\
$A_n$ 			& $x^2-y^{n+1}$ & n  \\
triple point ($D_4$) & $x^3-y^3$ & 4  \\
$D_n$			& $y(x^2-y^{n-2})$ & n  \\
$E_6$			& $x^3-y^4$ & 6 \\
$E_7$			& $x(x^2-y^3)$ & 7  \\
$E_8$			& $x^3-y^5$ & 8  \\
\hline
\end{tabular}
\end{center}

The simple singularities are all quasi-homogeneous and therefore $m=t$. Furthermore,
for simple singularities, the maximum 
number of conditions $c$ a polynomial must satisfy in order to have a singularity of this type is also equal to the Milnor number.

In addition to the simple singularities, we also need
four-fold points which have a local equation $x^4+\alpha x^2y^2 + y^4$
with arbitrary $\alpha$. They have $m=t=9$.
\end{example}

We next consider certain special points at infinity. Here the
conditions appearing in the center focus setting do not 
match those in singularity theory:

\begin{lem} \xlabel{lTjurinaInfinity}
Let $F \in \CC[x,y,z]$ be a homogeneous polynomial of degree $e$.
%and $F',(F_x)',(F_y)' \in \CC[x,z]$ the dehomogenizations of $F, F_x, F_y$ with respect to $x$. 
Then 
\[
	\bigl( F', (F_x)', (F_y)' \bigr) = \bigl(F',(F')_y,z(F')_z \bigr) \subset \CC[y,z]
\]
where we denoted by a prime the dehomogenization of a homogeneous polynomial with respect to $x$. \end{lem}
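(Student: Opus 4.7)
The plan is to show that both ideals can be rewritten in terms of the same three elements $F'$, $(F')_y$, and $z(F')_z$, using two elementary identities coming from dehomogenization and Euler's relation.

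First I would verify that dehomogenization with respect to $x$ commutes with the partial derivative with respect to $y$, since neither operation affects the $y$- or $z$-variables; writing $F = \sum a_{ijk} x^i y^j z^k$ makes this transparent and yields $(F_y)' = (F')_y$ immediately.

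The second identity is where the real content lies. Because $F$ is homogeneous of degree $e$, Euler's relation $xF_x + yF_y + zF_z = eF$ holds; dehomogenizing it by setting $x = 1$ gives
\[
(F_x)' \;=\; eF' - y(F')_y - z(F')_z.
\]
Substituting these two identities into the left-hand ideal replaces $(F_x)'$ by a $\CC[y,z]$-linear combination of $F'$, $y(F')_y$, and $z(F')_z$; since $(F')_y$ is already a generator, so is $y(F')_y$, and the inclusion $\subseteq$ follows. Conversely, rearranging the displayed identity as $z(F')_z = eF' - (F_x)' - y(F_y)'$ exhibits $z(F')_z$ as a combination of generators on the left-hand side, giving $\supseteq$.

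No step is really an obstacle; the only point deserving attention is why the third generator is $z(F')_z$ rather than $(F')_z$. This asymmetry is forced by dehomogenizing with respect to $x$: Euler's relation furnishes $xF_x$ as a combination of $yF_y$ and $zF_z$, and when $x$ is set to $1$ the coefficients $y$ and $z$ of the remaining derivatives survive. Multiplication by $y$ can be absorbed by the ideal generator $(F')_y$, but there is no analogous generator to absorb a factor of $z$, so the third generator must appear pre-multiplied by $z$.
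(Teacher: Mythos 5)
Your proof is correct and follows essentially the same route as the paper: dehomogenize the Euler relation $eF = xF_x + yF_y + zF_z$ with respect to $x$, use that dehomogenization commutes with $\partial_y$ and $\partial_z$, and trade $(F_x)'$ for $z(F')_z$ modulo the other generators. The only difference is presentational (you spell out both inclusions, the paper rewrites the generating set in one step), so there is nothing to add.
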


\begin{proof}
Since $F$ is homogeneous of degree $e$ we have the Euler relation
\[
 eF = xF_x+yF_y + zF_z.
\] 
Dehomogenizing with respect to $x$, we obtain
\[
	dF' = (F_x)' + y(F_y)' + z(F_z)'
\]
and hence $(F_x)' = eF' - y(F_y)' + z(F_z)'$. This shows
\[
	\bigl( F', (F_x)', (F_y)' \bigr) =
	\bigl( F', z(F_z)', (F_y)' \bigr).
\]
Observing that $(F_y)' = (F')_y$ and $(F_z)' = (F')_z$ gives the claim.
\end{proof}

\begin{defn}
Let $F \in \CC[[y,z]]$ be the germ of a reduced curve singularity at $(y,z)=(0,0)$ with no component on $z=0$. Then we call
\[
	t_z := \dim \frac{\CC[[y,z]]}{(F_y,zF_z,F)}
\]
the {\sl modified Tjurina number}. It is invariant under analytic base changes that preserve the line $z=0$. 
\end{defn}

In the case of quasi-homogeneous singularities we can give a geometric interpretation of the modified Tjurina number:

\begin{prop} \xlabel{pModifiedGeometric}
Let $F \in \CC[[y,z]]$ be the germ of a reduced quasi-homogeneous singularity at $(y,z)=(0,0)$ with no components contained in the line  $\{z=0\}$. Then 
\[
	t_z = t + i -1
\]
where $t$ is the usual Tjurina number and $i$ is the intersection multiplicity of $F$ with $\{z=0\}$.
\end{prop}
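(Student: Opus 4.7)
The plan is to exploit the Euler relation for the quasi-homogeneous singularity to eliminate $F$ from both ideals, and then to compare the two resulting ideals via a short exact sequence whose outer terms can be computed directly.

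Since $F$ is quasi-homogeneous there exist weights $w_1, w_2$ and a weighted degree $d>0$ with $w_1 y F_y + w_2 z F_z = d F$. In particular $F \in (yF_y, zF_z) \subset (F_y, zF_z)$ and $F \in (F_y, F_z)$, so the ideals in the definitions simplify to
\[
(F_y, zF_z, F) = (F_y, zF_z) \quad\text{and}\quad (F_y, F_z, F) = (F_y, F_z).
\]
Write $R = \CC[[y,z]]$. The inclusion $(F_y, zF_z)\subset (F_y, F_z)$ yields a short exact sequence
\[
0 \longrightarrow (F_y, F_z)/(F_y, zF_z) \longrightarrow R/(F_y, zF_z) \longrightarrow R/(F_y, F_z) \longrightarrow 0.
\]
The right-hand term has dimension equal to the Milnor number, which equals $t$ for a quasi-homogeneous singularity. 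So it suffices to show that the left-hand term has dimension $i - 1$.

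The left-hand module is generated by the class of $F_z$, so I compute the colon ideal $J' = \{g \in R : gF_z \in (F_y, zF_z)\}$. If $gF_z = aF_y + bzF_z$, then $(g-bz)F_z \in (F_y)$. Because $F$ has an isolated singularity and $R$ is Cohen--Macaulay, $F_y, F_z$ form a regular sequence, so $F_z$ is a nonzero divisor modulo $F_y$. Hence $g - bz \in (F_y)$, i.e.\ $g \in (F_y, z)$; conversely any such $g$ clearly lies in $J'$. Therefore
\[
(F_y, F_z)/(F_y, zF_z) \cong R/(F_y, z).
\]
Finally, reducing modulo $z$ gives $R/(F_y, z) \cong \CC[[y]]/\bigl(F_y(y,0)\bigr)$. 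By hypothesis $\{z=0\}$ is not a component of $F$, so $F(y,0) = u(y)\,y^{i}$ for some unit $u \in \CC[[y]]$, where $i = i(F, \{z=0\})$. Differentiating, $F_y(y,0) = y^{i-1}\bigl(i\,u(y) + y\,u'(y)\bigr)$, and the bracketed factor is a unit (since $i \geq 1$ and $u(0)\neq 0$). Hence $\dim_\CC R/(F_y, z) = i - 1$, and substituting back into the exact sequence yields $t_z = t + i - 1$.

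The only subtle point is the colon computation, which relies on $F_z$ being a nonzero divisor modulo $F_y$; this is where the isolated-singularity hypothesis (finiteness of the Milnor number) is used, ensuring $F_y, F_z$ is a regular sequence in $R$. The quasi-homogeneity is used twice: once to drop $F$ from each ideal, and once implicitly in the identification $t = m$, which is what makes the final formula come out cleanly in terms of the Tjurina number rather than the Milnor number.
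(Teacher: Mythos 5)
Your proof is correct and follows essentially the same route as the paper: both use the weighted Euler relation to drop $F$ from the ideals and then split the colength of $(F_y,zF_z)$ as $\dim\CC[[y,z]]/(F_y,z)+\dim\CC[[y,z]]/(F_y,F_z)=(i-1)+t$. The only cosmetic differences are that you make the splitting explicit via a short exact sequence and a colon-ideal/regular-sequence argument where the paper invokes the scheme-theoretic decomposition $(F_y,zF_z)=(F_y,z)\cup(F_y,F_z)$, and you get $\dim\CC[[y,z]]/(F_y,z)=i-1$ by differentiating $F(y,0)=u(y)y^i$ instead of using the Euler relation a second time.
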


\begin{proof}
Since $F$ is quasi-homogeneous we have the weighted Euler relation 
\[
	(\deg F)F = (\deg y)yF_y + (\deg z)zF_z
\]
and hence
\[
	(F_y,zF_z,F) = (F_y,zF_z).
\]
Since $F$ has no components on $\{z = 0\}$, the derivative $F_y$ is not divisible by $z$. Therefore
\[
	(F_y,zF_z) =  (F_y,z) \cup (F_y,F_z).
\]
By the weighted Euler relation we also have
\[
	(F,z) = (yF_y,z) = (y,z) \cup (F_y,z).
\]
Hence, we find that
\[
	t_z = \dim \frac{\CC[y,z]}{(F_y,z)} + t
\]
and 
\[
	i = 1 + \dim \frac{\CC[y,z]}{(F_y,z)}.
\]
The claim follows. 
\end{proof}

\begin{example} \xlabel{eTjuInfinity} We will be interested in the following cases:

\begin{center}
\begin{tabular}{|c|c|c|c|}
\hline
condition on $F$ & $t$ & $i$ & $t_z$  \\ 
\hline \hline
transversal to the line at $\infty$ & 0 & 1 & 0 \\ \hline
tangent to the line at $\infty$ & 0 & 2 & 1 \\ \hline
general node at $\infty$ 	    & 1 & 2 & 2 \\ \hline
%node at $\infty$ with one branch & $xz  + \dots$ & $\ge3$  \\
%tangent to the line at $\infty$  & &  \\
node at $\infty$ with one branch & \multirow{2}{*}{$1$}& \multirow{2}{*}{$3$} & \multirow{2}{*}{$3$}  \\
tangent to the line at $\infty$  & &  &\\ \hline
general triple point at $\infty$		& $4$ & $3$  & 6 \\
\hline
\end{tabular}
\end{center}
Since all cases in the table involve reduced quasi-homogeneous singularities without components on the line at infinity, we can use 
Proposition \ref{pModifiedGeometric} to compute $t_z$.
\end{example}

%%%%%%%%%%%%%%%%%%%%%%%%%%%%%%%%%%%%%%%%%%%%
\section{Singularities and the Inverse Problem} \xlabel{sInverse}
%%%%%%%%%%%%%%%%%%%%%%%%%%%%%%%%%%%%%%%%%%%%
\nosubsections

In this section, we recall the solution to the inverse problem from \cite{inverseProblemWalcherSuggestion}. Since we are interested in differential forms of a fixed degree $d$, we work with homogeneous polynomials and state the definitions and results of \cite{inverseProblemWalcherSuggestion} in the graded setting. The proofs remain the same. 

\newcommand{\h}[1]{\overline{#1}}

\begin{defn}
If $P \in \CC[x,y]$ is a polynomial of degree $d$, we denote by
$\h{P} \in \CC[x,y,z]$ its homogenization. For differential forms $\omega = P\dx + Q\dy$
we define $\h{\omega} := \h{P} dx + \h{Q} dy$.
\end{defn}

\begin{rem}
Notice that the definitions of algebraic integral curves and cofactors do not
change if we homogenize everything. Similarly, Darboux's Theorem also holds
for homogenized polynomials and differential forms. 
\end{rem}

\begin{defn}
Let $C_1,\dots,C_r \in \CC[x,y,z]$ be homogeneous polynomials.
Then 
\[
	 \begin{pmatrix}
	C_{1x} & C_{1y} & C_1 &  &   \\
	\vdots & \vdots & & \ddots & \\
	C_{rx} & C_{ry} & & & C_r
	\end{pmatrix}
\]
is called the {\sl Darboux Matrix} of the configuration $C_1,\dots,C_r$.
\end{defn}

\begin{prop} \xlabel{pDarbouxMatrix}
Let $C_1,\dots,C_r$ be a configuration of plane curves. A differential form $\omega = P\dx + Q\dy$ has integral curves $C_i$ with cofactors $K_i \dx\dy$ if and only if
\[
	M \cdot (Q,-P, -K_1,\dots,-K_r)^t = 0
\]
where $M$ is the Darboux matrix of $C_1,\dots,C_r$.
\end{prop}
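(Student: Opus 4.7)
The proof should just be a direct unpacking of the definitions: the integral curve condition is a single polynomial identity per curve, and the matrix equation is precisely those identities stacked in a vector.

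My plan is to start on the ``geometric'' side, by computing
\[
	dC_i \wedge \omega = (C_{ix}\dx + C_{iy}\dy) \wedge (P\dx + Q\dy) = (C_{ix}Q - C_{iy}P)\,\dx\wedge\dy.
\]
By the definition of algebraic integral curve and cofactor, $C_i$ is an integral curve of $\omega$ with cofactor $K_i\dx\dy$ precisely when
\[
	dC_i\wedge\omega = C_i\cdot K_i\,\dx\wedge\dy,
\]
so the integral curve condition for $C_i$ is equivalent to the polynomial identity
\[
	C_{ix}Q - C_{iy}P - C_i K_i = 0.
\]

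Next I would look at the $i$-th row of $M\cdot(Q,-P,-K_1,\dots,-K_r)^t$. Because the nonzero entries in the $i$-th row of $M$ are $C_{ix}$ and $C_{iy}$ in the first two columns and $C_i$ in column $i+2$, this row of the product equals
\[
	C_{ix}Q + C_{iy}(-P) + C_i(-K_i) = C_{ix}Q - C_{iy}P - C_i K_i.
\]
Thus the $i$-th entry of the matrix product is zero if and only if the integral curve condition for $C_i$ (with cofactor $K_i\dx\dy$) holds. Taking all $i = 1,\dots,r$ at once gives the proposition.

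I do not expect any genuine obstacle: the only thing worth a remark is that the statement is phrased for the homogenized data from the preceding definition, so one should note that the computation above goes through verbatim for $\h\omega = \h P\,dx + \h Q\,dy$ and the homogeneous $C_i \in \CC[x,y,z]$, since $d\h C_i = \h C_{i,x}dx + \h C_{i,y}dy + \h C_{i,z}dz$ but the $dz$-term drops out when wedged with $\h\omega$ only after we take the $\dx\wedge\dy$ component; equivalently, the Darboux matrix as defined only records the $x$- and $y$-derivatives, which is exactly what enters the cofactor relation. Hence the same row-by-row identification proves the proposition in the graded setting.
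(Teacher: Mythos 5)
Your proof is correct and takes essentially the same route as the paper: expand $dC_i\wedge\omega - C_iK_i\,\dx\dy = (C_{ix}Q - C_{iy}P - C_iK_i)\,\dx\dy$ and observe that the simultaneous vanishing of these expressions is exactly the matrix identity read row by row. The closing remark on the homogenized setting adds nothing beyond what the paper's preceding remark already notes, but it does no harm.
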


\begin{proof} 
The definition of integral curve gives
\begin{align*}
 	0 &= dC_i \wedge \omega - C_iK_i \dx\dy \\
	   &= (C_{ix}\dx + C_{iy}\dy) \wedge (P\dx + Q\dy) - C_iK_i \dx\dy \\
	   &= (C_{ix}Q - C_{iy}P-C_iK_i) \dx\dy.
\end{align*}
Writing these equations in matrix form gives the claimed identity.
\end{proof}

Since, by Lemma \ref{lUnionOfIntegralCurves}, a reduced (possibly reducible) curve $C = C_1\dots C_r$ is an integral curve of a differential form if and only if all its irreducible factors are, we now restrict to the case of one integral curve.

\begin{defn}
Let $C \in \CC[x,y,z]$ be a homogeneous polynomial with no multiple factors and $M_C = (C_x,C_y,C)$ its Darboux matrix. Let
\[
	\sV_C(d) := (\ker M_C)_d
\]
be the space of degree-$d$ differential forms that admit $C$ as an integral curve.

The vector $H_C := (C_y,-C_x,0)^t$ is always in the kernel of $M_C$ and represents the Hamiltonian vector field associated to $C$. Let 
\[
	\sV^H_C(d) := (H_C)_d
\]
be the {\sl vector space of trivial differential forms}. Its elements are of the form $FH_C$, where $F \in \CC[x,y,z]$ is homogeneous of degree $d-e+1$.
\end{defn}

% the ideal of cofactors without brackets
\newcommand{\ioc}{(C_x, C_y) : C}
% the ideal of cofactors without brackets
\newcommand{\iocbigl}{\bigl(\ioc\bigr)}

\begin{lem} \xlabel{lidealOfCofactors}
Let $C \in \CC[x,y,z]$ be a homogeneous polynomial with no multiple factors and no components at infinity, and $K$ a second homogeneous polynomial. Then $K$ is the cofactor of a differential form $\omega$ admitting $C$ as an integral curve if and only if
\[
	K \in \ioc.
\]
Furthermore $\ioc$ is saturated.
We call $\ioc$ the {\sl ideal of cofactors} of $C$. 
\end{lem}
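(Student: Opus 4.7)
My plan proceeds in two parts: the equivalence of the first sentence, and then the saturation.

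For the equivalence, I would unfold the definition directly. For $\omega = P\,dx + Q\,dy$, the coefficient of $dx\wedge dy$ in $dC\wedge\omega$ is $C_xQ - C_yP$, so the cofactor equation $dC \wedge \omega = CK\,dx\wedge dy$ becomes $CK = C_xQ - C_yP$, manifestly equivalent to $CK \in (C_x,C_y)$, i.e., $K \in (C_x, C_y):C$. For the converse, given such a $K$, write $CK = A C_x + B C_y$ with $A, B$ homogeneous of degree $d$ (possible because all ingredients are graded with matching degrees) and take $\omega := -B\,dx + A\,dy$.

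For saturation, my plan is to show that $z$ is a non-zero-divisor on $R/J$, where $J := (C_x, C_y):C$; this forces $\mathfrak{m} = (x,y,z)$ not to be an associated prime of $R/J$, so $J$ is saturated. Assume then $zK \in J$, so that $zCK \in (C_x, C_y)$, and write $zCK = A C_x + B C_y$. Restricting to $z=0$ produces a syzygy $\bar A \bar C_x + \bar B \bar C_y = 0$ in $\CC[x,y]$, where bars denote restriction to $z=0$. The hypothesis $z \nmid C$ ensures $\bar C := C|_{z=0} \neq 0$, so the syzygy module of $(\bar C_x, \bar C_y)$ in $\CC[x,y]$ is generated by the Koszul syzygy $(\bar C_y/\bar L, -\bar C_x/\bar L)$, where $\bar L := \gcd(\bar C_x, \bar C_y)$. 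Lifting a suitable multiple of this syzygy to $R$ and subtracting from $(A,B)$ arranges $A, B \in (z)\cdot R$; cancelling the non-zero-divisor $z$ from $zCK = A C_x + B C_y$ then yields $CK \in (C_x,C_y)$, so $K \in J$.

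The main obstacle is the lifting step when $\bar L \neq 1$, which occurs precisely when $C$ is tangent to, or singular along, the line at infinity. The Koszul syzygy $(\bar C_y/\bar L, -\bar C_x/\bar L)$ does not automatically lift to a true syzygy of $(C_x, C_y)$ in $R$; writing $C_x = \tilde L \tilde c_x + z\alpha_x$ and $C_y = \tilde L \tilde c_y + z\alpha_y$ for naive lifts, a remainder of the form $z\gamma$ with $\gamma = \tilde c_y \alpha_x - \tilde c_x \alpha_y$ intervenes and must be absorbed. I expect to close this using the Euler relation $eC = xC_x + yC_y + zC_z$, which modulo $z$ forces $\bar L \mid \bar C$, together with the reducedness of $C$ to control the multiplicity with which $\bar L$ can appear in $\bar C$ and hence the structure of $\gamma$.
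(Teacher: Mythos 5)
Your first half (the equivalence) is correct and is exactly the paper's argument: $dC\wedge\omega=CK\,\dx\dy$ unwinds to $C_xQ-C_yP=CK$, so cofactors of forms admitting $C$ are precisely the elements of $(C_x,C_y):C$, and conversely any graded writing $CK=AC_x+BC_y$ gives back $\omega=-B\,\dx+A\,\dy$. The problem is the saturation half, and the gap sits exactly where all the content is. You reduce saturation to the claim that $z$ is a non-zero-divisor on $R/J$, $J=(C_x,C_y):C$, and prove it by restricting a relation to $z=0$ and lifting a Koszul syzygy of $(C_x|_{z=0},C_y|_{z=0})$. As you yourself note, the lift fails to exist on the nose when $\bar L=\gcd(C_x|_{z=0},C_y|_{z=0})\ne 1$ — but that is precisely the case where $C$ is tangent to, or singular on, the line at infinity, i.e.\ the only case in which anything needs proving. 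The sketched repair does not close it: the Euler relation does give $\bar L\mid \bar C:=C|_{z=0}$, but $\bar C$ is typically non-reduced even for reduced $C$ (already for the smooth conic $xz-y^2$ one has $\bar C=-y^2$, $\bar L=y$), so ``reducedness of $C$ controls the multiplicity of $\bar L$ in $\bar C$'' is not a usable principle, and no argument is given for absorbing the remainder term $z\gamma$.

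More fundamentally, your target statement is much stronger than saturation and encodes nontrivial geometry. Since multiplication by $C$ embeds $R/J$ into $R/(C_x,C_y)$, all associated primes of $R/J$ are among the height-two primes of the points of $W=V(C_x,C_y)$; so ``$z$ is a non-zero-divisor on $R/J$'' is equivalent to saying that the residual scheme $Y=V(J)$ has no points on the line at infinity. Now $W$ itself certainly can meet $\{z=0\}$ (any tangency or singularity of $C$ at infinity puts points there), and deciding whether the linked scheme $Y$ avoids $\{z=0\}$ requires exactly the local analysis at infinity via the Euler relation and modified Tjurina numbers that the paper carries out separately (Lemma \ref{lTjurinaInfinity}, Proposition \ref{pLinkage}); it is not needed for, and should not be smuggled into, this lemma. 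The paper's proof is soft: from the embedding above, $\mathrm{Ass}(R/J)\subseteq\mathrm{Ass}\bigl(R/(C_x,C_y)\bigr)$, and $(C_x,C_y)$ is a codimension-two complete intersection (this is where ``no multiple factors'' and ``no components at infinity'' are used, to rule out a common factor of $C_x,C_y$), hence Cohen--Macaulay and unmixed; therefore $\mathfrak{m}=(x,y,z)$ is not an associated prime of $R/J$ and $J$ is saturated. Replacing your goal ``$z$ is a non-zero-divisor on $R/J$'' by ``$\mathfrak{m}\notin\mathrm{Ass}(R/J)$'' via this containment both eliminates the lifting step and shortens the argument to two lines.
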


\begin{proof}
This is just the definition of ideal quotients, i.e.
\[
	K \in \iocbigl \iff
	\exists A,B \in \CC[x,y,z] \colon AC_x + BC_y = K.
\]
Since $(C_x,C_y)$ is a complete intersection it is saturated, and therefore $\ioc$ is also saturated.
\end{proof}

With this we have a first computation of the dimension of $\sV_C(d)$:

\begin{prop} \xlabel{pFirstDimensionFormula}
Let $C \in \CC[x,y,z]$ be a homogeneous polynomial with no multiple factors and no components at infinity. Then
\[
	\dim \sV_C(d) = \dim \sV_C^H(d) + \dim \iocbigl_{d-1}
\]
\end{prop}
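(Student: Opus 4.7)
My plan is to build the short exact sequence
\[
	0 \longrightarrow \sV^H_C(d) \longrightarrow \sV_C(d) \stackrel{\kappa}{\longrightarrow} \iocbigl_{d-1} \longrightarrow 0
\]
and then read off the dimension equality by additivity. The middle map $\kappa$ is the cofactor map: given $\omega = P\dx + Q\dy \in \sV_C(d)$, Proposition \ref{pDarbouxMatrix} supplies the relation $C_x Q - C_y P = CK$, which uniquely determines a homogeneous polynomial $K$ of degree $d-1$ because $C$ is a non-zero-divisor in the integral domain $\CC[x,y,z]$. Setting $\kappa(\omega) := K$ gives a well-defined linear map into $\CC[x,y,z]_{d-1}$.

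For surjectivity of $\kappa$ onto $\iocbigl_{d-1}$, by construction the image of $\kappa$ is the set of polynomials of degree $d-1$ that arise as cofactors of some differential form admitting $C$ as an integral curve; Lemma \ref{lidealOfCofactors} identifies this set with $\iocbigl_{d-1}$. For the kernel, an element $(P, Q)$ lies in $\ker \kappa$ iff $C_x Q - C_y P = 0$, equivalently iff $(Q, -P)$ is a syzygy of the pair $(C_x, C_y)$. The Koszul syzygy $(C_y, -C_x)$ produces exactly the Hamiltonian subspace: the multiples $F \cdot (C_y, -C_x)$ with $F$ homogeneous of degree $d - e + 1$ correspond bijectively to the elements $FH_C$ of $\sV^H_C(d)$, so the inclusion $\sV^H_C(d) \subset \ker \kappa$ is immediate.

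The main obstacle is the reverse inclusion $\ker \kappa \subset \sV^H_C(d)$. It suffices to show $(C_x, C_y)$ is a regular sequence in $\CC[x,y,z]$, for then the Koszul complex is a free resolution of $(C_x, C_y)$ and every syzygy is a multiple of $(C_y, -C_x)$. To verify regularity I would show $\gcd(C_x, C_y) = 1$ by ruling out a common irreducible factor $g$. If $g = z$, comparing the $z$-independent parts of $C_x$ and $C_y$ forces the degree-$e$ piece of $C(x,y,0)$ to have vanishing $x$- and $y$-derivatives and hence to be zero, so $z \mid C$, contradicting the assumption of no components at infinity. If $g \neq z$, one combines $g \mid C_x$ and $g \mid C_y$ with the Euler identity $eC = xC_x + yC_y + zC_z$ and the squarefreeness of $C$ to derive $g^2 \mid C$, contradicting the no-multiple-factors hypothesis. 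Granting regularity, Koszul gives $\ker \kappa = \sV^H_C(d)$, the short exact sequence is in place, and the dimension formula follows.
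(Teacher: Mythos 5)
Your proposal follows the paper's own proof essentially step for step: the paper also considers the linear map sending $\omega\in\sV_C(d)$ to its cofactor, identifies the image with $\iocbigl_{d-1}$ via Lemma \ref{lidealOfCofactors}, and identifies the kernel with $\sV^H_C(d)$ by arguing that $C_x$ and $C_y$ have no common factor, so that every syzygy of $(C_x,C_y)$ is a multiple of the Koszul syzygy $(C_y,-C_x)$. So the decomposition and both identifications are exactly the paper's.

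The one place where you add detail beyond the paper is the attempted proof that $\gcd(C_x,C_y)=1$, and there the case $g\neq z$ does not go through: the implication ``$g\mid C_x$, $g\mid C_y$, $g\neq z$, $C$ squarefree $\Rightarrow g^2\mid C$'' is false. Take $C=z^3+x^2y$, which is irreducible (hence squarefree) and has no component at infinity, yet $C_x=2xy$ and $C_y=x^2$ share the factor $g=x$ while $x^2\nmid C$. In fact for this $C$ the proposition's conclusion itself fails: $\omega=2y\,\dx+x\,\dy$ lies in $\sV_C(1)$ with cofactor $0$, whereas $\sV^H_C(1)=0$ and $\iocbigl_{0}=0$. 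So no argument from the stated hypotheses alone can establish the coprimality; what is really needed is that $V(C_x,C_y)$ is finite (equivalently $\gcd(C_x,C_y)=1$), which fails here because $C$ has a cusp at the point $(0:1:0)$ on the line at infinity. To be fair, the paper's proof simply asserts the coprimality from squarefreeness alone, so your gap mirrors a tacit extra hypothesis of the paper (satisfied in all of its applications, where the relevant schemes are finite); but as written, your $g\neq z$ step is a claim that cannot be proved, and it is the only point at which your argument and the paper's differ.
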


\begin{proof}
Consider the linear map
\[
	\pi \colon \sV_C(d) \to \CC[x,y,z]
\]
sending a differential form $\omega$ admitting $C$ as an integral curve to its cofactor. The image of $\pi$ is the degree $d-1$ part of the ideal of cofactors. The kernel of $\pi$ consists of differential forms $P\dx+Q\dy$ with cofactor $0$. Such differential forms satisfy
\[
	QC_x - PC_y  = 0 \iff QC_x = PC_y.
\]
Since $C$ has no multiple factors, $C_x$ and $C_y$ have no nontrivial common factor. Therefore all solutions of the above equation are of the form
\[
	Q = FC_y, \quad P = FC_x
\]
for some homogeneous polynomial $F \in \CC[x,y,z]$. But these are just the elements of $\sV^H_C(d)$. 
\end{proof}

We now turn to a key geometric observation of \cite{inverseProblemWalcherSuggestion}, namely that the vanishing locus of the ideal of cofactors has a geometric meaning:

\begin{prop} \xlabel{pLinkage}
Let $C \in \CC[x,y,z]$ be a homogeneous polynomial of degree $e$ with no multiple factors and no components at infinity, and $M_C = (C_x,C_y,C)$ the Darboux matrix. Consider
\begin{enumerate}
\item $X$ the finite scheme defined by the vanishing of $M_C$,
\item $Y$ the finite scheme defined by the ideal of cofactors,
\item $W$ be the finite scheme defined by $C_x=C_y=0$ i.e. the scheme of critical points of the Hamiltonian vector field of $C$.
\end{enumerate}
Then
\[
	X \cup Y = W \quad \text{and} \quad X \cap Y = \emptyset.
\]
In particular
\[
	\deg Y = (e-1)^2 - \deg X.
\]
\end{prop}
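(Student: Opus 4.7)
The plan is to verify both claims set-theoretically and then upgrade them to scheme-theoretic equalities by a direct manipulation of the three ideals $I_W = (C_x, C_y)$, $I_X = (C_x, C_y, C)$, and $I_Y = (C_x, C_y):C$. The inclusions $X, Y \subseteq W$ are immediate from $I_W \subseteq I_X$ and $I_W \subseteq I_Y$. For the reverse, take $p \in W$: if $C(p) = 0$ then $p \in X$; if $C(p) \neq 0$ then every $f \in (C_x, C_y):C$ satisfies $(fC)(p) = 0$, and since $C(p) \neq 0$ we conclude $f(p) = 0$, so $p \in Y$. The set-theoretic disjointness $X \cap Y = \emptyset$ rests on the local fact that $C$ lies in $(C_x, C_y)$ in the local ring at each singular point $p$ of $C$ --- an assertion following, after dehomogenization, from the weighted Euler relation in the quasi-homogeneous setting of this paper. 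From this inclusion, $1 \in ((C_x, C_y):C)_p$ locally, so $I_Y$ contains a section not vanishing at $p$, whence $p \notin Y$.

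To upgrade $X \cup Y = W$ to the scheme-theoretic equality $I_X \cap I_Y = I_W$, the inclusion $\supseteq$ is clear, and for $\subseteq$ I would proceed directly: write $f \in I_X \cap I_Y$ as $f = aC_x + bC_y + gC$ using $f \in I_X$, and then $f \in I_Y$ yields $fC \in (C_x, C_y)$, so $gC^2 \in (C_x, C_y)$. The set-theoretic disjointness $Y \cap V(C) = X \cap Y = \emptyset$ combined with saturatedness of $I_Y$ (Lemma \ref{lidealOfCofactors}) implies that $C$ is a non-zero-divisor on $S/I_Y$. Hence $g \in I_Y$, so $gC \in (C_x, C_y)$ and $f \in (C_x, C_y)$. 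For $X \cap Y = \emptyset$ as a projective scheme, observe that $I_X + I_Y = (C) + ((C_x, C_y):C)$ since $(C_x, C_y) \subseteq I_Y$; its projective vanishing locus is empty by the pointwise analysis, so the sum contains a power of the irrelevant ideal.

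The degree formula is then B\'ezout: the forms $C_x$ and $C_y$ of degree $e-1$ share no nonconstant factor (implicit in $W$ being a finite scheme), so $\deg W = (e-1)^2$. Because $X$ and $Y$ partition $W$ scheme-theoretically, $\deg X + \deg Y = (e-1)^2$, rearranging to the stated formula.

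The principal subtlety is the local inclusion $C \in (C_x, C_y)_p$ at every singular point $p$, which by a theorem of Saito is exactly the analytic quasi-homogeneity of the singularity. In the quasi-homogeneous setting relevant here it is automatic from the weighted Euler relation, and once available the remaining scheme-theoretic and numerical content amounts to essentially ideal-theoretic bookkeeping.
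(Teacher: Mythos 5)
Your set-theoretic identification $X\cup Y=W$ and the ideal-theoretic bookkeeping are sound: the pointwise argument when $C(p)\neq 0$, the colon manipulation $gC^2\in(C_x,C_y)\Rightarrow gC\in I_Y\Rightarrow g\in I_Y$ (using that $C$ is a nonzerodivisor modulo the saturated ideal $I_Y$ once one knows $C$ vanishes at no point of $Y$), and the additivity of degrees for a scheme-theoretic partition are all fine. The genuine gap is the step $X\cap Y=\emptyset$, on which you then make everything else depend. The proposition assumes only that $C$ is reduced with no components at infinity; quasi-homogeneity of its singularities is not a hypothesis here, so justifying the local inclusion $C\in(C_x,C_y)_p$ by the weighted Euler relation imports an assumption the statement does not grant you, and by Saito's theorem that inclusion holds at an isolated singular point precisely when the singularity is analytically quasi-homogeneous, so the step cannot be repaired without some such hypothesis. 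Moreover, even granting quasi-homogeneity, your argument addresses only the affine singular points of $C$, whereas $X$ also contains the points of $C$ on the line at infinity where $C_x=C_y=0$: for instance a smooth point at which $C$ is tangent to the line at infinity (it contributes $t_z=1$), or a singular point at infinity. At those points the required inclusion $C\in(C_x,C_y)_p$ is a different local statement; after dehomogenizing with respect to $x$ it concerns the ideal $\bigl((C')_y,\,z(C')_z\bigr)$ of Lemma \ref{lTjurinaInfinity}, and the Euler-type relation must be compatible with the line $z=0$ as in Proposition \ref{pModifiedGeometric}. Your proof supplies no argument there.

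This matters because the degree formula is the part of the proposition actually used later (Propositions \ref{pExpected} and \ref{pFamilyGeneric}), and it does not need disjointness at all. The paper's route is the standard linkage fact for the $0$-dimensional complete intersection $W$ (after noting, which you should also justify rather than read off from the word ``finite'', that $C_x$ and $C_y$ have no common factor), cited to \cite[p.\,291]{inverseProblemWalcherSuggestion}. Concretely: at each point $p$ of $W$ set $A=\sO_{W,p}$ and let $c$ denote the image of $C$; the isomorphism $A/(0:_A c)\cong cA$ gives $\ell(\sO_{Y,p})=\ell(A)-\ell(A/cA)=\ell(\sO_{W,p})-\ell(\sO_{X,p})$, and summing over $p$ yields $\deg Y=(e-1)^2-\deg X$ with no quasi-homogeneity and no disjointness. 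So, as written, your argument proves the proposition only under an extra hypothesis, omits the points at infinity in the disjointness claim, and turns the unconditional numerical conclusion into a conditional one; you should decouple the degree formula from $X\cap Y=\emptyset$ and treat the disjointness, which genuinely requires the quasi-homogeneity assumptions made elsewhere in the paper, as a separate statement.
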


\begin{proof}
Since $C$ has no multiple factors and no components at infinity, $C_x$ and $C_y$ have no common factors and $W$ is what is called a complete intersection in algebraic geometry. The relations
\[
	X \cup Y = W \quad \text{and} \quad X \cap Y = \emptyset
\]
are a standard fact of algebra \cite[p.\,291]{inverseProblemWalcherSuggestion}.
\end{proof}

\begin{rem}
The relation above is called {\sl linkage} in algebraic geometry, and $Y$ is referred to as {\sl  the set of points linked to $X$ via $W,$} see \cite[Section 21.10]{Ei95} for an introduction. Linkage can also be used to infer further properties of $Y$, beyond its degree, from those of $X$. This is not used in this paper, but might be a fruitful direction of further research.
\end{rem}

With Proposition \ref{pLinkage} we obtain a refined count of differential forms with a given integral curve:

\newcommand{\expectedDim}{\delta}

\begin{prop}  \xlabel{pExpected}
Let $C \in \CC[x,y,z]$ be a homogeneous polynomial of degree $e$ with no multiple factors and no components at infinity, let $M_C = (C_x,C_y,C)$ be the Darboux matrix, and $X \subset \PP^2$ the scheme defined by the vanishing of $M_C$. Then
\[
	\dim \sV_C(d) \ge {d-e+3 \choose 2} + {d+1 \choose 2} - (e-1)^2 + \deg X =: \expectedDim
\]
where the first summand is $0$ if $e > d+1$.
We call $\expectedDim$ the {\sl expected dimension} of differential forms with integral curve $C$.
\end{prop}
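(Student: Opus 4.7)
My plan is to start from the dimensional decomposition
\[
	\dim \sV_C(d) = \dim \sV^H_C(d) + \dim \iocbigl_{d-1}
\]
supplied by Proposition \ref{pFirstDimensionFormula} and estimate each of the two summands separately.

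First I would handle the trivial-forms summand. By the definition of $\sV^H_C(d)$, its elements are of the form $FH_C$ with $F \in \CC[x,y,z]$ homogeneous of degree $d-e+1$, so
\[
	\dim \sV^H_C(d) = {d-e+3 \choose 2},
\]
with the understanding that the right-hand side is $0$ whenever $d-e+1<0$, i.e. $e>d+1$, since there are no polynomials of negative degree. This already matches the first term in the target expression and disposes of the degenerate range.

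For the ideal-of-cofactors summand I would combine two inputs: Lemma \ref{lidealOfCofactors} tells us that $\ioc$ is saturated, and Proposition \ref{pLinkage} tells us that it cuts out a zero-dimensional scheme $Y\subset\PP^2$ with
\[
	\deg Y = (e-1)^2 - \deg X.
\]
For a zero-dimensional subscheme of $\PP^2$ with saturated ideal $I_Y$ the Hilbert function $h_Y(n) := \dim (\CC[x,y,z]/I_Y)_n$ satisfies the classical bound $h_Y(n) \le \deg Y$ for every $n \ge 0$ (with equality for $n$ large enough). Feeding this into the short exact sequence
\[
	0 \to (\ioc)_{d-1} \to \CC[x,y,z]_{d-1} \to (\CC[x,y,z]/\ioc)_{d-1} \to 0
\]
yields
\[
	\dim \iocbigl_{d-1} \ge {d+1 \choose 2} - \deg Y = {d+1 \choose 2} - (e-1)^2 + \deg X,
\]
and adding this to the exact expression for $\dim \sV^H_C(d)$ gives exactly $\expectedDim$.

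The construction is really just bookkeeping built on top of the two nontrivial preceding results. The only conceptual step, and the only place I expect to need to be careful, is the Hilbert-function inequality $h_Y(n) \le \deg Y$: it requires both that $\ioc$ is saturated (so that it is genuinely the ideal of $Y$ and not a larger ideal with smaller Hilbert function) and that $Y$ is zero-dimensional (so that $\deg Y$ is a finite upper bound), and both of these properties were established precisely to make this step go through.
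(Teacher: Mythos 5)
Your proposal is correct and follows essentially the same route as the paper: decompose via Proposition \ref{pFirstDimensionFormula}, compute $\dim \sV^H_C(d) = \binom{d-e+3}{2}$ exactly, and bound $\dim\bigl(\ioc\bigr)_{d-1}$ from below using saturation (Lemma \ref{lidealOfCofactors}) and the linkage formula $\deg Y=(e-1)^2-\deg X$ (Proposition \ref{pLinkage}), with the ``imposes at most $\deg Y$ conditions'' step phrased as the Hilbert-function bound for the zero-dimensional scheme $Y$. The only cosmetic point is your parenthetical justification of why saturation matters: a non-saturated ideal would be \emph{smaller} than the ideal of $Y$ in low degrees (so the quotient would be larger), but this does not affect the argument.
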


\begin{proof}
Firstly, we notice that $H_C$ is a differential form of degree $e-1$ and that every element of 
$(H_C)_{d}$ is of the form $aH_C$ with $a \in \CC[x,y,z]_{d-e+1}$. So
\[
	\dim V^H_C(d) = {d-e+3 \choose 2}.
\]
Secondly, since the ideal of cofactors is saturated, 
\[
	\iocbigl_{d-1}
\]
is the vector space of polynomials of degree $d-1$ vanishing on $Y$. This imposes at most $\deg Y$ conditions so
\[
	\dim \iocbigl_{d-1} \ge {d+1 \choose 2} - \deg Y.
\]
Using Proposition \ref{pFirstDimensionFormula} and Proposition \ref{pLinkage} we obtain the claimed inequality. 
\end{proof}

As a last step we compute $\deg X$ from the geometry of $C$.

\begin{prop} \xlabel{pTjurinaDegX}
Let $C \in \CC[x,y,z]$ be a homogeneous polynomial of degree $e$ with no multiple factors and no components at infinity, $M_C = (C_x,C_y,C)$ the Darboux matrix, and $X$ the finite scheme defined by the vanishing of $M_C$. Let $C_{\sing}$ the set of singular points of $C$ outside of the line at infinity, and $C_\infty$ the set of points of $C$ that lie on the line at infinity. Then
\[
	\deg X = \sum_{P \in C_{\sing}} t(P) + \sum_{P \in C_\infty} t_z(P)
\]
where $t(P)$, $t_z(P)$ denote the Tjurina and modified Tjurina number of the curve $C$ at $P$. 
\end{prop}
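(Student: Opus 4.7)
The strategy is to compute $\deg X$ as a sum of local lengths at the points of its support, and then identify each local contribution with the appropriate (modified) Tjurina number. The only non-routine input is Lemma \ref{lTjurinaInfinity}, which already does the work needed at infinity.

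First I would observe that $X$ is a zero-dimensional subscheme of $\PP^2$ cut out by $(C, C_x, C_y)$, so
\[
\deg X = \sum_{P} \ell_P(X),
\]
where $P$ runs over the finitely many points of $\PP^2$ at which $C$, $C_x$, $C_y$ vanish simultaneously. Any such $P$ either lies in the affine chart $\{z\neq 0\}$, in which case the simultaneous vanishing of $C$ and its partials forces $P\in C_{\sing}$, or $P$ lies on $\{z=0\}$ and hence belongs to $C_\infty$. Conversely, points of $C_\infty$ lying in the smooth locus of $C$ and meeting $\{z=0\}$ transversally are not in the support of $X$; these contribute $t_z(P)=0$ by Example \ref{eTjuInfinity}, so extending the sum to all of $C_\infty$ as in the statement does no harm.

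At an affine singular point $P$, I would dehomogenize with respect to $z$: after translating $P$ to the origin of $\{z=1\}$ and setting $f(x,y):=C(x,y,1)$, the identities $C_x(x,y,1)=f_x$ and $C_y(x,y,1)=f_y$ give
\[
\ell_P(X) \;=\; \dim_\CC \CC[[x,y]]/(f,f_x,f_y) \;=\; t(P),
\]
directly from the definition of the Tjurina number. At a point $P$ on the line at infinity, I would apply a projective coordinate change preserving $\{z=0\}$ to move $P$ into the chart $\{x\neq 0\}$, then set $F'(y,z):=C(1,y,z)$ and translate $P$ to the origin of this chart. The local ring of $X$ at $P$ is then
\[
\CC[[y,z]]\big/\bigl(F',\,C_x(1,y,z),\,C_y(1,y,z)\bigr),
\]
and Lemma \ref{lTjurinaInfinity} identifies this ideal with $(F',(F')_y,z(F')_z)$. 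Since $C$ is squarefree with no components on $\{z=0\}$, the germ $F'$ is reduced and has no component on $\{z=0\}$, so the quotient dimension is exactly $t_z(P)$ by definition. Summing these contributions over $C_{\sing}$ and $C_\infty$ yields the stated formula.

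The only potential obstacle is the identification of the generating ideal at infinity, but this is precisely the content of Lemma \ref{lTjurinaInfinity}, which is already established. The remaining work is the bookkeeping of the local/global decomposition and the small observation that the transversal smooth points at infinity are harmlessly included in the sum because $t_z$ vanishes on them.
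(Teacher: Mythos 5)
Your proposal is correct and follows essentially the same route as the paper: compute $\deg X$ locally, identify the affine contributions with Tjurina numbers by definition, and use Lemma \ref{lTjurinaInfinity} to identify the contributions at infinity with the modified Tjurina numbers. The extra bookkeeping you supply (the coordinate change preserving $\{z=0\}$ and the observation that transversal smooth points at infinity contribute $t_z=0$) only makes explicit what the paper's terser proof leaves implicit.
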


\begin{proof}
Since $X$ is finite we can compute the degree locally at the points. Outside of $z=0$ the solutions of $C_x=C_y=C=0$ are precisely the singular points of $C$. The multiplicity of each point is Tjurina number (by definition). On the line at infinity every point of $C$ can possibly occur in $X$. Lemma \ref{lTjurinaInfinity} shows that the multiplicity of these points in $X$ is given by the modified Tjurina number. 
\end{proof}

\begin{rem}
One cannot avoid the inequality in Proposition \ref{pExpected}. Let for example $C$ be an irreducible quartic curve with $3$ nodes and $C'$ the union of a line and a cubic curve. Aussume that both curves intersect the line at infinity transversally. Then both curves have degree $e=e' =4$ and $\deg X = \deg X' = 3$ and therefore $\deg Y = \deg Y'= 3^2-3=6$. One can compute that $Y$ is a set of $6$ points that do not lie on a conic, but $Y'$ is a special set  $6$ points that do lie on a conic. Since also $\dim \sV^H_C(3) = \dim \sV^H_{C'}(3) = 1$ in both cases, we obtain
\[
	\dim \sV_C(3) = 1 = \expectedDim
\]
while
\[
	\dim \sV_{C'}(3) = 2 > \expectedDim.
\]
 \end{rem}
 
 We now want to describe how $\sV_C(d)$ behaves in families. 
 
 \begin{defn}
 An irreducible variety $B \subset \PP\bigl(\CC[x,y,z]_e\bigr)$ is called an {\sl equi-singular family of plane curves of degree $e$ with modified Tjurina number $t$} if there exist a Zariski open subset $U \subset B$ such that for all $C \in U$  the scheme $X$ defined by $C_x=C_y=C=0$ is finite and satisfies $\deg X = t$.
\end{defn}

\begin{prop}  \xlabel{pFamilyGeneric}
Let $B$ be an {\sl equi-singular family of plane curves of degree $e$ with modified Tjurina number $t$}, $C \in B$ a curve and $\omega \in \sV_C(d)$ a differential form 
satisfying the following properties
\begin{enumerate}
 	\item $C$ is reduced, i.e. has no multiple factors,
	\item $C$ has no components at infinity,
	\item $\omega$ has only finitely many integral curves of degree $e$, \label{iFinite}
	\item $\deg X = t$, \label{iDegt}
	\item $\dim \sV_C(d) = \expectedDim$ as expected. \label{iEta}
\end{enumerate}
Let $\Omega_B$ be the  the family of differential forms (up to scaling) of degree $d$ that admit at least one of the algebraic integral curves $C \in B$. Let $\Omega_{B,\omega}$ be the component of $\Omega_B$ that contains $\omega$. Then
\[
	\dim \Omega_{B,\omega} = \dim B + \expectedDim - 1.
\]
\end{prop}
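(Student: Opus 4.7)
The natural framework is the incidence variety
\[
	\sI = \bigl\{ (C,[\omega]) \in B \times \PP\bigl(\CC[x,y]_{\le d}^2\bigr) \;\bigm|\; C \text{ is an integral curve of } \omega \bigr\}
\]
with its two projections $p\colon \sI \to B$ and $q\colon \sI \to \PP(\CC[x,y]_{\le d}^2)$. By construction the image of $q$ (restricted to an irreducible component) is $\Omega_B$, and the fibre of $p$ over a curve $C\in B$ is $\PP(\sV_C(d))$. The plan is to compute $\dim \Omega_{B,\omega}$ by computing the dimension of the component $\sI_{B,\omega}$ of $\sI$ containing $(C,[\omega])$ in two different ways, using $p$ to go up and $q$ to come back down.

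First I would control the fibre dimension of $p$ near $(C,[\omega])$. By Proposition \ref{pExpected} we have $\dim \sV_{C'}(d) \ge \expectedDim$ for every $C' \in B$ (here we use that $\deg X$ is constant on the open set of equi-singular curves, by the definition of an equi-singular family together with Proposition \ref{pTjurinaDegX}). Combined with the upper semi-continuity of the function $C' \mapsto \dim \sV_{C'}(d)$ and the hypothesis $(\ref{iEta})$ that equality $\dim \sV_C(d) = \expectedDim$ holds at our specific $C$, the set of $C' \in B$ where $\dim \sV_{C'}(d) = \expectedDim$ is a non-empty open subset of $B$ containing $C$. Restricting $p$ to the component $\sI_{B,\omega}$, its generic fibre is then projective of dimension $\expectedDim - 1$, so
\[
	\dim \sI_{B,\omega} = \dim B + \expectedDim - 1.
\]

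Next I would show that $q\colon \sI_{B,\omega} \to \Omega_{B,\omega}$ is generically finite. By hypothesis $(\ref{iFinite})$, the differential form $\omega$ has only finitely many integral curves of degree $e$, so the fibre $q^{-1}([\omega]) \cap \sI_{B,\omega}$ is finite. Generic finiteness of $q$ on the component $\sI_{B,\omega}$ then follows by upper semi-continuity of fibre dimension, and hence
\[
	\dim \Omega_{B,\omega} \;=\; \dim \sI_{B,\omega} \;=\; \dim B + \expectedDim - 1,
\]
which is the claim.

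The only subtle step is the first one: one must check that the hypotheses are open enough to guarantee that the dimension of $\sV_{C'}(d)$ really stays equal to $\expectedDim$ on a neighbourhood of $C$ in $B$, rather than merely at $C$ itself. The equi-singularity of $B$ (which keeps $\deg X$ constant generically) together with the semi-continuity argument above is what makes this work, and I do not expect any further difficulty.
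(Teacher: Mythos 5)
Your proposal is correct and follows essentially the same route as the paper's proof: an open neighbourhood of $C$ in $B$ where $\dim \sV_{C'}(d)$ is identically $\expectedDim$ (lower bound from Proposition \ref{pExpected} plus semi-continuity plus hypothesis (\ref{iEta})), giving a family of dimension $\dim B + \expectedDim - 1$ over $B$, followed by generic finiteness of the projection to differential forms via hypothesis (\ref{iFinite}). The only cosmetic differences are that you phrase the total space as an incidence variety rather than as the projectivized bundle $\PP(\sV)$ over the good open set, and you invoke semi-continuity directly for the kernel of the Darboux matrix instead of passing through the ideal of cofactors and the linked scheme $Y$.
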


\begin{proof}
The first three conditions are open, and thus hold on a Zariski open neighbourhood of $C$ in $B$. Condition (\ref{iDegt}) is open on $B$ by definition of an equi-singular family. Now consider the ideal of cofactors. Firstly, for all $C'$ in the open neighbourhood of $C$ where the first $4$ conditions are satisfied we have 
\[
	\dim \bigl(( C'_x, C'_y ) : C'\bigr)_{d-1} \ge {d+1 \choose 2} - \deg Y'
\]
with $\deg Y = (e-1)^2 - \deg X = (e-1)^2-t$ constant. Secondly by condition (\ref{iEta})
\[
	\dim \bigl(\ioc\bigr)_{d-1} = {d+1 \choose 2} - \deg Y
\]
at the special point $C$. 

It is a well-known fact in algebraic geometry that the dimension of the vector space of polynomials vanishing on a finite scheme is semi-continuous in families, i.e. the strata where the dimension increases are closed. Since by the above conditions the dimension can not decrease, the strata where 
\[
	\dim \sV_C(d) \not= \expectedDim
\]
must be closed and the stratum with $\dim \sV_C(d) = \expectedDim$ containing $C$ is open. 
Taken together we have a Zariski open subset $U \subset B$ where $\dim \sV_C(d)$ is constant. We therefore have a vector bundle $\sV$ of rank $\delta$ over $U$ whose fiber over any $C' \in U$ is $\sV_{C'}(d)$. The projectivization $\PP(\sV)$ has dimension $\dim B + \expectedDim -1$.

Now consider the morphism from $\PP(\sV)$ to the projectivized space of degree $d$ differential forms. By (\ref{iFinite}) this map is finite, and its image also has dimension $\dim B + \expectedDim -1$. Since this image is contained in $\Omega_{B,\omega}$ this proves the claim.
\end{proof}

\begin{example}
Consider the family $B \subset \CC[x,y,z]_4$ of plane $3$-cuspidal quartics that are tangent to the line
at infinity (see Figure \ref{f910}). We estimate its dimension:

The space of all quartic curves is $\PP^{14}$. The condition of having a 
cusp has codimension $2$ in this space, so we have at least an $14-3\cdot2 = 8$ 
dimensional family of $3$-cuspidal quartics. The condition of being tangent to the
line at infinity is codimension $1$, so $B$ is at least $7$-dimensional. 

Let us now calculate the degree of $X$ for a specific element $C \in B$. We assume that $X$ is finite for the moment.  Then the degree of $X$ can be calculated locally at the special points. By Example \ref{eSimpleSingularities} this degree is $2$ at cusps and by Example \ref{eTjuInfinity} it is $1$ at
the tangent to infinity.

Putting this together, we see that $\deg X \ge 3\cdot 2 + 1=7$. The expected dimension $\expectedDim$ of
degree $2$ differential forms with this type of integral curve is
\[
	\expectedDim = {d-e+3 \choose 2} + {d+1 \choose 2} - (e-1)^2 + \deg X
	= 0 + 3 - 3^2+7 = 1
	%\expectedDim = 2{d+2 \choose 2} + {d+1 \choose 2} - {d+e+1 \choose 2} + \deg X
	%\ge 2\cdot 6 + 3 - 21+ 7 = 1.
\]
It remains to verify that one example of such a configuration satisfies the conditions
of Proposition \ref{pFamilyGeneric}. This is the case, for example, for
\[
	C =  x^2y^2-2x^2yw-2xy^2w+x^2w^2-2xyw^2+y^2w^2
\]
with $w =  (1/8)(x+y-z)$.

This proves that there exists a $7$-dimensional family of degree $2$ differential forms whose algebraic integral curves include a 3-cuspidal quartic tangent to the line at infinity.
\end{example}

%%%%%%%%%%%%%%%%%%%%%%%%%%%%%%%%%%%%%%%%%%%%
\section{Singularities and Darboux Integrability} \xlabel{sDarboux}
%%%%%%%%%%%%%%%%%%%%%%%%%%%%%%%%%%%%%%%%%%%%
\nosubsections

In this section we introduce an analytic invariant $\eta$ attached to a triple $(\omega,C,a)$ where $\omega$ is a differential form in the plane, $C$ is an algebraic integral curve of $\omega$, and $a$ is any point in the plane. While $\eta$ is defined for all points, it carries the most information when $a$ is a zero of $\omega$.

The main result of this section will be, that if $C$ has a quasi-homogeneous singularity at $a$, the value of $\eta$ will not depend on $\omega$. This will allow us to prove Darboux integrability for differential forms constructed via the methods of the previous section from a plane curve $C$, solely based on the singularities of $C$.

\newcommand{\pfrac}[2]{( #1 : #2)}
\newcommand{\bigpfrac}[2]{\bigl( #1 : #2\bigr)}
\newcommand{\sfrac}[2]{\frac{#2}{#1}}

\begin{defn}
Let $\omega$ be a differential form with algebraic integral curve $C$ and cofactor $K$ and $a$ a zero of $\omega$. Then we define
\[
	\eta := \eta(\omega,C,a) := \bigpfrac{K(a)}{d\omega(a)},
\]
where we interpret the right-hand side as a ratio, i.e $(u:v) = (\lambda u : \lambda v)$ for $\lambda \not =0$. The degenerate case $(0:0)$ is also allowed. 
%With this understood we also write
%\[
%	\eta := \eta(\omega,C,a) := \frac{K(a)}{d\omega(a)}
%\]
%for better readability.
\end{defn}

\begin{rem}
The values $K(a)$ at a zero of $\omega$ have also been considered in \cite{necessaryChavarriga} to find necessary conditions for the existence of algebraic integral curves. 

Here, we use $\eta$ to obtain sufficient conditions for Darboux integrability of $\omega$ with respect to given integral curves. 
\end{rem}

\begin{prop} \xlabel{pSingularPoints}
Let $\omega$ be a differential form with algebraic integral curve $C \in \CC[x,y]$. If $C$ has a quasi-homogeneous singularity at $(0,0)$ and has no multiple factors, then either 
\[
	\eta\bigl(\omega,C,(0,0)\bigr) =  \pfrac{\deg{C}}
	{\deg{x}+\deg{y}}
\]
or $\eta = (0:0)$.
\end{prop}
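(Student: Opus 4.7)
The plan is to reduce the statement to a single differentiated identity by combining the integral curve condition with the quasi-homogeneity of $C$. Since $\eta$ is a ratio of two $2$-forms evaluated at a point, and both $K_C$ and $d\omega$ transform by the same Jacobian factor under an analytic coordinate change, $\eta$ is a biholomorphic invariant. I may therefore assume, after passing to suitable local analytic coordinates, that $C$ itself is quasi-homogeneous in $(x,y)$ with $a = \deg x$, $b = \deg y$, $e = \deg C$. Setting $\theta := ax\,\partial_x + by\,\partial_y$ for the Euler field, this just says $ax C_x + by C_y = eC$. Writing $\omega = P\,dx + Q\,dy$ and $K_C = k\,dx \wedge dy$, the integral curve condition takes the form $C_x Q - C_y P = Ck$.

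The key second identity I would introduce is
\[
	ax P + by Q = C H
\]
for some holomorphic germ $H$. This is the conceptual heart of the proof, and is where both quasi-homogeneity and reducedness enter: $\theta$ is tangent to $\{C=0\}$ because $\theta(C)=eC$ vanishes on $C$, and $\omega$ annihilates the tangent space to $C$ at every smooth point, so $\iota_\theta \omega = axP + byQ$ vanishes on the dense smooth locus of $\{C=0\}$; reducedness of $C$ then upgrades this pointwise vanishing to divisibility by $C$. This step is the main subtle point of the argument.

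Next I would eliminate $H$. Forming the combination $ax(C_xQ - C_yP) + C_y(axP + byQ)$, the $P$-contributions cancel and the $Q$-terms collect into $Q(axC_x + byC_y) = eCQ$ by the Euler relation; cancelling $C$ gives $HC_y = eQ - axk$, and the analogous combination $by(C_xQ - C_yP) - C_x(axP + byQ)$ yields $HC_x = eP + byk$. Differentiating the first with respect to $x$ and the second with respect to $y$ and subtracting, the common $HC_{xy}$ drops out, and the product rule applied to $axk$ and $byk$ produces the master identity
\[
	H_x C_y - H_y C_x \;=\; e(Q_x - P_y) - (a+b)k - \theta(k).
\]

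Evaluating at the singular point $(0,0)$ now finishes the argument: singularity of $C$ gives $C_x(0) = C_y(0) = 0$, so the left-hand side vanishes, and $x=y=0$ there kills $\theta(k)(0)$. What remains is $e\,d\omega(0) = (a+b)\,k(0)$, which says exactly that $\eta = \bigl(k(0) : d\omega(0)\bigr) = (e : a+b)$, unless both sides vanish, which is the degenerate case $\eta = (0:0)$ allowed in the statement.
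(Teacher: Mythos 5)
Your argument is correct and arrives at the same key identity $e\,d\omega(0,0)=(\deg x+\deg y)\,K(0,0)$ as the paper, but by a genuinely different route. The paper works out the kernel of the Darboux matrix $(C_x,C_y,C)$: the weighted Euler relation supplies one generator, coprimality of $C_x,C_y$ shows the Hamiltonian syzygy generates the rest, and writing $\omega=F\omega_1+G\omega_2$ one evaluates at the origin. You instead prove the single divisibility $C\mid \iota_\theta\omega=axP+byQ$ (tangency of the Euler field along $\{C=0\}$ plus reducedness and the Nullstellensatz), eliminate to get $HC_y=eQ-axk$ and $HC_x=eP+byk$, and differentiate; your $H$ is, up to the factor $e$, the coefficient $G$ of the Hamiltonian part in the paper's decomposition, so the proofs are close relatives. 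What yours buys: it avoids having to prove that the two syzygies generate the whole kernel, it works verbatim for analytic germs, it never needs $(0,0)$ to be a zero of $\omega$, and it even produces an identity valid at every point, namely $e(Q_x-P_y)=(a+b)k+\theta(k)+H_xC_y-H_yC_x$. What the paper's version buys: the explicit kernel description also shows that $\omega$ automatically vanishes at the singular point, and the same computation is recycled for the two-curve statement, Proposition \ref{pTwoCurves}.

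One step of your reduction deserves more care. A quasi-homogeneous singularity only guarantees that the germ of $C$ at $(0,0)$ is a unit times a germ that is (by Saito's theorem even right-) equivalent to a quasi-homogeneous polynomial; in the paper's applications such units genuinely occur, because $C$ has components not passing through the point in question. Jacobian invariance of $\eta$, which you state correctly, handles the coordinate change but not the change of local defining equation by a unit $u$: the cofactor of $u^{-1}C$ is $K-u^{-1}\,du\wedge\omega$, which agrees with $K$ at the point only if $\omega$ vanishes there. This is harmless in the situation where $\eta$ is actually used --- the definition takes $a$ to be a zero of $\omega$, and in the model case the paper's decomposition $\omega=F\omega_1+G\omega_2$ shows such vanishing is automatic at a quasi-homogeneous singular point of a reduced integral curve --- but it should be said explicitly. (The paper's own proof is even more cavalier here, simply invoking the global weighted Euler relation for $C$, so on this point your write-up is already the more careful of the two.)
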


\begin{proof}
Since $C$ is quasi-homogeneous, we have the weighted Euler relation
\[
	(\deg C)C = (\deg x)xC_x + (\deg y)yC_y
\]
The Darboux matrix of $C$ is
\[
	M = (C_x,C_y,C)
\]
and its syzygy matrix is given by
\[
\begin{pmatrix}
(\deg x)x & C_y \\
(\deg y)y & -C_x \\
-\deg C & 0
\end{pmatrix}
\]
The first column comes from the Euler relation. Since $\deg C \ne 0$ is a constant, all further syzygies can be assumed to have a zero in the last row. Since $C$ has no multiple factors $C_x$ and $C_y$ have no common factor. Therefore all syzygies between these are generated by the one in the second column.

The two columns represent the following differential forms and cofactors:
\begin{align*}
\omega_1 &= (\deg x)x \dy -  (\deg y)y \dx  & K_1&=\deg C \dx \wedge \dy \\
\omega_2 & = C_y \dy +  C_x \dx & K_2 &=0
\end{align*}
We compute
\begin{align*}
	d\omega_1 &= (\deg x + \deg y) \dx \wedge \dy \\
	d\omega_2 &= (C_{yx} - C_{xy})\dx \wedge \dy = 0.
\end{align*}
Observe also that $\omega_1(0,0) = 0$ and also $\omega_2(0,0) = 0$ since $C$ is singular at $0$.

\medskip

Since $\omega_1$ and $\omega_2$ generate the vector space of all differentials with this integral curve, we must have
\[
	\omega = F \omega_1 + G\omega_2
\]
We evaluate $d\omega$ at $(0,0)$:
\begin{align*}
	d \omega (0,0) 
	&= (dF\omega_1 + Fd\omega_1 + dG\omega_2 + Gd\omega_2)(0,0) \\
	&= (\deg x + \deg y)F(0,0).
\end{align*}	
Since the cofactor of $\omega$ is $K = FK_1 + GK_2$ we have
\[
	K(0,0)= (\deg C)F(0,0).
\]
This completes the proof.
\end{proof}

\begin{example}
Using Proposition \ref{pSingularPoints} we can compute $\eta$ for the simple singularities, see Table \ref{tSimple}. It was pointed out to us by Michel L\"onne that these ratios agree with the log canonical thresholds of the simple singularities. It is not clear to us, why this is the case. 

\begin{table}
\begin{tabular}{|c|c|c|c|c|c}
\hline
Type & Equation & $\eta$, if not (0:0) \\
\hline
node ($A_1$)		& $x^2-y^2$ & 1:1  \\
cusp  ($A_2$)		& $x^2-y^3$ & 6:5  \\
tacnode ($A_3$)	& $x^2-y^4$ & 4:3 \\
$A_n$ 			& $x^2-y^{n+1}$ & (2n+2):n+3  \\
triple point ($D_4$) & $x^3-y^3$ & 3:2  \\
$D_n$			& $y(x^2-y^{n-2})$ & (2n-2):n \\
$E_6$			& $x^3-y^4$ & 12:7 \\
$E_7$			& $x(x^2-y^3)$ & 9:5   \\
$E_8$			& $x^3-y^5$ & 15:8  \\
\hline
\end{tabular}

\caption{$\eta$ for simple singularities} 
\label{tSimple}
\end{table}
\end{example}

Let us now see how this information allows us to prove Darboux integrability:

\begin{example}
Consider $\omega$ of degree $2$ with a $3$-cuspidal quartic $C$ as integral curve (see Figure \ref{f910}). Evaluating
at the three cusps $c_1$, $c_2$ and $c_3$ we obtain
\[
	\bigl(K(c_i) : d\omega(c_i) \bigr) = (6:5) \,\text{or}\, (0:0) 
\]
for all $i$. In any case $5K-6d\omega$ vanishes at $c_1, c_2$ and $c_3$. Now, $\omega$ is of degree $2$ and therefore $K$ has degree $1$. A linear form that vanishes at $3$ non-collinear points
must vanish identically. By Darboux's Theorem this implies that $\omega$ has a Darboux integrating factor.
\end{example}

We can also define our invariant for several integral curves at the same time:

\begin{defn}
Let $\omega$ be a differential form with algebraic integral curves $C_1,\dots,C_r$, and cofactors $K_1,\dots,K_r$ and $a$ a zero of $\omega$. Then we define
\[
	\eta := \eta(\omega,\{C_1,\dots,C_r\},a) := \bigl(K_1(a) : \dots : K_r(a) : d\omega(a) \bigr)
\]
where we again interpret the right-hand side as a ratio and the degenerate case $(0:\dots:0)$ is also allowed. 
%With this understood we also write
%\[
%	\eta := \eta(\omega,C,a) := \frac{K(a)}{d\omega(a)}
%\]
%for better readability.
\end{defn}

We compute this invariant for two intersecting integral curves:

\begin{prop} \xlabel{pTwoCurves}
Let $\omega$ be a differential form with algebraic integral curve $C,D \in \CC[x,y]$ intersecting at $(0,0)$. If $C\cdot D$ has no multiple factors and has a quasi-homogeneous singularity at $(0,0)$ and
\[
	\deg CD > \deg x + \deg y
\]
then	
\[
	\eta\bigl(\omega,
	\{C,D\},(0,0)\bigr) =  (\deg{C}:\deg D:\deg x + \deg y)
	\quad \text{ or } \quad (0:0:0).
\]
\end{prop}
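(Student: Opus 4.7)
The plan is to adapt the syzygy-based strategy of Proposition \ref{pSingularPoints} to the joint Darboux matrix
\[
M = \begin{pmatrix} C_x & C_y & C & 0 \\ D_x & D_y & 0 & D \end{pmatrix}
\]
of the two integral curves. Since $CD$ is quasi-homogeneous with weights $(a,b) := (\deg x, \deg y)$ and $C, D$ are coprime, both $C$ and $D$ are themselves quasi-homogeneous with these weights, of degrees $e_C := \deg C$ and $e_D := \deg D$. Their weighted Euler relations combine into the syzygy $(ax, by, -e_C, -e_D)$ of $M$, corresponding to the Euler form $\omega_1 := -by\,\dx + ax\,\dy$ with cofactors $K_C(\omega_1) = e_C$, $K_D(\omega_1) = e_D$ and $d\omega_1 = (a+b)\,\dx \wedge \dy$. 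Since $\omega_1(0,0) = 0$, this already realizes the target ratio $(e_C : e_D : a+b)$ at the origin.

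For a general $\omega$ with $\omega(0,0) = 0$, I would Taylor-expand $\omega$ in weighted-homogeneous parts and note that $K_C(0,0), K_D(0,0), d\omega(0,0)$ are determined solely by the weighted-degree-$(a+b)$ component $\omega^{(a+b)} = P^{(b)}\,\dx + Q^{(a)}\,\dy$. The integral curve conditions at this leading order make $(Q^{(a)}, -P^{(b)}, -K_C(0,0))$ a weighted-homogeneous syzygy of $(C_x, C_y, C)$, and analogously for $D$. As in Proposition \ref{pSingularPoints}, these syzygy modules are generated by Euler and Koszul syzygies, so weighted-degree considerations give
\begin{align*}
(Q^{(a)}, -P^{(b)}, -K_C(0,0)) &= \alpha(ax, by, -e_C) + \beta(C_y, -C_x, 0), \\
(Q^{(a)}, -P^{(b)}, -K_D(0,0)) &= \gamma(ax, by, -e_D) + \delta(D_y, -D_x, 0),
\end{align*}
with $\alpha, \gamma \in \CC$ constants and $\beta, \delta$ weighted-homogeneous of degrees $a+b-e_C$ and $a+b-e_D$ respectively.

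The crux is to show $\alpha = \gamma$ and $\beta = \delta = 0$, which forces $\omega^{(a+b)} = \alpha \omega_1$ and hence $(K_C(0,0), K_D(0,0), d\omega(0,0)) = \alpha(e_C, e_D, a+b)$, giving the claimed ratio when $\alpha \neq 0$ and $(0:0:0)$ otherwise. Equating the two expressions for $(Q^{(a)}, -P^{(b)})$ and eliminating via multiplications by $C_x, C_y, D_x, D_y$ together with the Euler relations should yield the polynomial identities
\[
\beta\, J = (\alpha - \gamma)\, e_D\, D, \qquad \delta\, J = (\alpha - \gamma)\, e_C\, C,
\]
where $J := C_x D_y - C_y D_x$ is the Jacobian. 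The hypothesis $\deg(CD) > \deg x + \deg y$ makes $J$ weighted-homogeneous of strictly positive degree (in particular $J(0,0) = 0$), while coprimality of $C, D$ ensures $J \not\equiv 0$. Thus $\alpha \neq \gamma$ would force $J$ to divide both $C$ and $D$, hence to be a unit -- contradicting its positive weighted degree. So $\alpha = \gamma$, and then $\beta J = \delta J = 0$ in the integral domain $\CC[x,y]$ gives $\beta = \delta = 0$. The main delicate step I anticipate is the clean derivation of the two polynomial identities from the elimination; once they are in hand the coprimality argument is short and concludes the proof.
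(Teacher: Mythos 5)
Your proposal is correct, but it takes a genuinely different route from the paper. The paper works with the joint $2\times 4$ Darboux matrix of $C$ and $D$ and first proves that its kernel is generated by the Euler syzygy together with the vector of $2\times 2$ minors of its first three columns (essentially the form $C\,dD$, with cofactors $-(C_xD_y-C_yD_x)$ for $C$ and $0$ for $D$); this generation step is a Hilbert--Burch type argument requiring the verification that $\{C=C_xD_y-C_yD_x=0\}$ is finite. It then writes $\omega=F\omega_1+G\omega_2$ and uses $\deg CD>\deg x+\deg y$ to make every $\omega_2$-contribution to $K_C$, $K_D$ and $d\omega$ vanish at the origin, exactly as in Proposition \ref{pSingularPoints}. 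You avoid the joint-kernel computation altogether: you use only the single-curve syzygy structure (Euler plus Koszul for $(C_x,C_y,C)$ and for $(D_x,D_y,D)$, i.e.\ the argument of Proposition \ref{pSingularPoints}), apply it to the weighted-degree-$(a+b)$ part of $\omega$ --- which, as you correctly observe, is all that $K_C(0,0)$, $K_D(0,0)$, $d\omega(0,0)$ see --- and your elimination identities $\beta J=(\alpha-\gamma)e_DD$ and $\delta J=(\alpha-\gamma)e_CC$ do come out exactly as predicted (multiply the two coordinate equations by $D_x,D_y$, resp.\ $C_x,C_y$, add, and use the Euler relations). The hypothesis $\deg CD>\deg x+\deg y$ enters only through $\deg J>0$, which also makes transparent why nodes give no information. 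What you gain is a more elementary argument (no determinantal/codimension-two fact about the $2\times4$ matrix, no analysis of $\{C=J=0\}$); what the paper's route gains is an explicit description of the whole space of forms admitting both integral curves, not just of its leading part at the origin. The one spot where you should add a line is the claim that coprimality of $C,D$ gives $J\not\equiv 0$: coprimality alone does not suffice (take $x$ and $x+1$), but here $C$ and $D$ are quasi-homogeneous of positive weighted degree, so every component of $\{C=0\}$ passes through the origin, and $J\equiv 0$ would force $D$ to be constant, hence zero, along such a component, contradicting coprimality; with that observation your proof closes up completely.
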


\begin{proof}
The Darboux matrix of $C,D$ is
\[
	M = \begin{pmatrix}
	 C_x & C_y & C & \\
	 D_x & D_y &    & D \\
	\end{pmatrix}
\]
We claim that the kernel of $M$ is presented by
\[
\begin{pmatrix}
	(\deg x)x & -CD_y \\
	(\deg y)y & CD_x \\
	-\deg C   & C_xD_y - C_yD_x \\
	-\deg D   & 0 
\end{pmatrix}.
\]
The first column comes from the weighted Euler relations for $C$ and $D$. 
The entries of the second column are the $2 \times 2$ minors of the first three columns of $M$ (with appropriate signs). Both are in the kernel. We need to prove that they generate. 
\medskip 

Since $\deg D$ is a nonzero constant we can restrict to the case of elements in the kernel that have the last entry equal to zero. Each such element is in the kernel of the matrix $M_{123}$ obtained from the first $3$ columns of $M$. It is a well know fact from commutative algebra that the kernel of a $2 \times 3$ matrix is generated by the vector of $2 \times 2$ minors if the common zero locus of these minors is of codimension $2$. Here $CD_y=CD_x=0$
if and only if  $D_x=D_y=0$ or $C=0$. In the first case the zero locus consists of points, since $D$ has no multiple components. In the second case we have $C=0$ and $C_xDy - C_yD_x=0$. If this has a $1$-dimensional solution it is a component of $C$ that is an integral curve of the Hamiltonian differential form of $D$, but the only ones of these that also pass through the origin are the components of $D$. Since $C$ and $D$ have no common components the locus $C = C_xDy - C_yD_x = 0$ is also finite. 
\medskip

Let $\omega_1$ and $\omega_2$ be the differential forms represented by the first and second columns, and $K_{C,i}, K_{D,i}$ their cofactors. Notice that by the assumption on the degrees $\omega_2, d\omega_2, K_{C,2}$ are all quasi-homogeneous of positive degree. Therefore the all vanish at $(0,0)$. The same computation as in the proof of 
Proposition \ref{pSingularPoints} gives the claim.
\end{proof}

\begin{example}
Using Proposition \ref{pSingularPoints}, we compute $\eta$ in the case where $CD$ has a simple singularity at the intersection point of $C$ and $D$. Also we compute $\eta$ in an ordenary $4$-fold point, where one branch belongs to $C$ and three branches belong to $D$. See Table \ref{tTwoCurves}. 

\begin{table}
\begin{tabular}{|c|c|c|c|c|c}
\hline
Type & $C\cdot D$ & $\eta$\\
\hline
node ($A_1$)		& $(x-y)(x+y)$ & no information  \\
tacnode ($A_3$)	& $(x-y^2)(x+y^2)$ & 2:2:3 \\
$A_{2n-1}$ 		& $(x-y^n)(x+y^n)$ & n:n:(n+1)  \\
triple point ($D_4$) & $y(x^2-y^2)$ & 1:2:2  \\
$D_n$			& $y(x^2-y^{n-2})$ & 2:2(n-2):n \\
$E_7$			& $x(x^2-y^3)$ & 3:6:5   \\
\hline
$4$-fold point		& $x(x^3-y^3)$ & 1:3:2   \\
\hline
\end{tabular}

\caption{$\eta \not= (0:0:0)$ for simple singularities with at least two branches, and for a fourfold point.} 
\label{tTwoCurves}
\end{table}
\end{example}

\begin{example} \xlabel{eThreeCuspidalAndLine}
Let $C$ be a $3$-cuspidal cubic and $L$ a tangent line to the cubic in a smooth point $B$. Denote the cusp points by $R$, $S$ and $T$ (see Figure \ref{f98}). Let $\omega$ be differential form 
with $C$ and $L$ as integral curves and let $K_C$ and $K_L$ denote the corresponding cofactors. 
Then the evaluation of $(K_L:K_C:d \omega)$ at $B$ and the cusps gives the following values
\[
	\begin{pmatrix}
		 2 & 2  & 3\\
		 0 & 6  & 5 \\
		 0 & 6  & 5\\
		 0 & 6  & 5
	\end{pmatrix}
\]
(or zero). Since the kernel of this matrix is generated by $(4,5,-6)^t$, the linear combination
\[	
	 4 K_L + 5 K_C - 6 d\omega
\]
vanishes on $B$ and at all cusps. If the case $(0:0:0)$ does not occur for any of the points, this
is also the only possible linear combination that could possibly be zero. 
\end{example}

\newcommand{\lineinfty}{\PP^1_{\infty}}

We now consider the line at infinity. The first case is interesting when there are more integral curves than intersection points at infinity. Although this is not used in the present paper, we record it here for reference since it is easy to prove.

\begin{prop} \xlabel{pInfinityLineZero}
Let $\omega$ be a differential form of degree $d$ and $C_1, \dots, C_r$ integral
curves of $\omega$ with cofactors $K_1,\dots,K_r$. If $\lineinfty$ is the line at infinity
and $\{p_1,\dots,p_k\} \subset \lineinfty$ the set of points lying on at least one of the integral curves. Counting with multiplicities, one can write
\[
		C_i \cap \lineinfty = \sum_j {\alpha_{ij}}p_j
\]
for positive integers $\alpha_{ij}$. Let $A = (\alpha_{ij})$ be the matrix of coefficients
and $b = (\beta_1,\dots,\beta_k)^t$ a vector such that $A\cdot b = 0$. Then
\[
	\Bigl(\sum \beta_iK_i\Bigr)|_{z=0} = 0.
\]
\end{prop}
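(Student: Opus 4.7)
The printed statement contains a minor index inconsistency: the vector $b$ is indexed up to $k$, but $\sum\beta_iK_i$ has $r$ terms. The natural reading, which I adopt, is that $b=(\beta_1,\dots,\beta_r)^t$ with $\sum_i\beta_i\alpha_{ij}=0$ for every $j$. Under this interpretation the conclusion $\bigl(\sum\beta_iK_i\bigr)|_{z=0}=0$ amounts to the vanishing of the degree-$(d-1)$ part of the polynomial $\sum_i\beta_iK_i\in\CC[x,y]$, since restricting the homogenization of a polynomial to $\{z=0\}$ extracts exactly its top-degree form. Writing $k_i$ for the degree-$(d-1)$ part of $K_i$ (possibly zero) and $c_i$ for the degree-$e_i$ part of $C_i$, the plan is to show $\sum_i\beta_ik_i=0$.

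First I would take top parts in the defining relation $C_iK_i=C_{ix}Q-C_{iy}P$, where $\omega=P\dx+Q\dy$ has degree-$d$ top parts $P_d,Q_d$. Comparing the components of total degree $e_i+d-1$ on each side yields the homogeneous identity
\[
c_i\,k_i \;=\; c_{ix}\,Q_d - c_{iy}\,P_d
\]
in the ring of binary forms $\CC[x,y]$. Multiplying by $\beta_i/c_i$ and summing, the logarithmic-derivative formula $\sum_i\beta_i(c_{ix}/c_i)=\phi_x$ rewrites this as
\[
\sum_i\beta_i k_i \;=\; \phi_x\,Q_d - \phi_y\,P_d, \qquad \phi := \log\prod_i c_i^{\beta_i}.
\]

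To finish I would factor each binary form $c_i$ into its linear factors on $\lineinfty$: if $\ell_j$ is a linear form vanishing at $p_j$, then $c_i=(\mathrm{const})\cdot\prod_j\ell_j^{\alpha_{ij}}$, so
\[
\prod_i c_i^{\beta_i} \;=\; (\mathrm{const})\cdot\prod_j \ell_j^{\sum_i\beta_i\alpha_{ij}}.
\]
The hypothesis makes every exponent vanish, so the product is a nonzero constant; hence $\phi_x=\phi_y=0$ and $\sum_i\beta_ik_i=0$, as required. The only real obstacle is the bookkeeping: one has to identify the degree-$(d-1)$ top form of $K_i$ with $K_i|_{z=0}$ after homogenization, and read the hypothesis consistently. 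Once the identity $c_ik_i=c_{ix}Q_d-c_{iy}P_d$ is in place, the logarithmic-derivative trick forces the conclusion with no further geometric input, which is consistent with the elementary flavor of the statement (no singularity-theoretic data at infinity is used).
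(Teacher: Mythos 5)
Your proof is correct and takes essentially the same route as the paper: extracting the top-degree parts of $C_iK_i=C_{ix}Q-C_{iy}P$ is exactly the paper's restriction of the Darboux syzygy to $\{z=0\}$, and your logarithmic-derivative computation with $c_i=\mathrm{const}\cdot\prod_j\ell_j^{\alpha_{ij}}$ is just the paper's additivity of cofactors $K_i|_{z=0}=\sum_j\alpha_{ij}M_j$ phrased with rational functions (thereby bypassing the appeal to Lemma \ref{lUnionOfIntegralCurves}). Your reading of the mis-indexed hypothesis as $\sum_i\beta_i\alpha_{ij}=0$ for every $j$ is indeed the condition the paper's own proof uses.
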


\begin{proof}
If $M$ is the Darboux matrix of $C_1, \dots, C_r$ and $s = (Q,-P,-K_1,\dots,-K_r)$ is the syzygy corresponding to $\omega$ and the cofactors, then
we can restrict the equation
\[	
	M \cdot s = 0
\]
to $\lineinfty$ by setting $z=0$. We then have 
\[
	C_i|_{z=0} = \prod_j L_i^{\alpha_{ij}}
\]
for linear equations $L_j$ with $\{L_j=0\} \cap \lineinfty = p_j$. Observe now, that
$M|_{z=0}$ is the same as the Darboux matrix of $C_1|_{z=0},\dots,C_r|_{z=0}$ and
that $\omega|_{z=0}$ is a differential form with integral curves $C_i|_{z=0}$ and
Cofactors $K_i|_{z=0}$.

It follows from  Lemma \ref{lUnionOfIntegralCurves} that $L_1,\dots, L_k$ define algebraic integral curves
of $\omega|_{z=0}$. Let $M_i$ be the corresponding cofactors. Now
\[
	C_i|_{z=0} = \prod_j L_i^{\alpha_{ij}}
\]
implies 
\[
	K_i|_{z=0} = \sum_j \alpha_{ij} M_j
\]
In particular $(\sum \beta_i K_i)|_{z=0} = 0$ if
\[
	A \cdot b = 0
\]
\end{proof}

Another interesting situation arises when the number of intersection points at infinity is large compared to the degree of the differential form. This is used in Construction \ref{c98}.

\begin{prop} \xlabel{pManyPointsInfinity}
Let $\omega$ be a differential form of degree $d$ and $C_1, \dots, C_r$ integral
curves of $\omega$ with cofactors $K_1,\dots,K_r$. If $\lineinfty$ is the line at infinity
and $\{p_1,\dots,p_k\} \subset \lineinfty$ the set of points that also lie on at least one integral curve. If $k > d+1$ and $x \in \lineinfty$ is a point, then
either: 
\[
	\eta = \eta\bigl(\omega,\{C_1,\dots,C_r\},x\bigr) = (\deg C_1: \dots : \deg C_i : d+1)
\]
or 
\[
\eta = (0:\dots:0).
\]
\end{prop}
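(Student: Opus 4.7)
The plan is to reduce the claim to a polynomial identity among binary forms on $\lineinfty\cong\PP^1$. I write $\bar P,\bar Q,\bar C_i,\bar K_i$ for the homogenizations of $P,Q,C_i,K_i$ (of degrees $d,d,e_i,d-1$, with $e_i:=\deg C_i$), and $p,q,c_i,k_i$ for their restrictions to $z=0$. The cofactor relation $\bar C_{ix}\bar Q-\bar C_{iy}\bar P=\bar C_i\bar K_i$ restricts on $\lineinfty$ to $c_{ix}q-c_{iy}p=c_ik_i$, while the coefficient of $\overline{d\omega}|_{z=0}$ is $q_x-p_y$. The auxiliary binary form
\[
L:=xp+yq\qquad\text{of degree }d+1
\]
will be central.

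The first step is to establish the identity
\[
c_i\bigl[e_i(q_x-p_y)-(d+1)k_i\bigr]=c_{iy}L_x-c_{ix}L_y.
\]
Multiplying the cofactor relation by $x$ and by $y$ and applying Euler $xc_{ix}+yc_{iy}=e_ic_i$ yields the two subsidiary relations $c_i(e_iq-xk_i)=c_{iy}L$ and $c_i(e_ip+yk_i)=c_{ix}L$. Differentiating the first in $x$ and the second in $y$ and subtracting, the first-order terms in $k_i$ cancel via the cofactor relation once more, and a final application of Euler to $c_i$ and to $k_i$ (of degree $d-1$) produces the desired identity.

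Next I claim that $L$ vanishes at every $p_j$. Fix $p_j=[a:b]$, choose some integral curve $C_i$ through $p_j$, and write $c_i=\ell^{\alpha_{ij}}h$ with $\ell=bx-ay$ the linear form vanishing at $p_j$ and $h(p_j)\neq 0$. Since $(a,b)\neq(0,0)$, at least one of the partials $c_{ix}$ (if $b\neq 0$) or $c_{iy}$ (if $a\neq 0$) vanishes at $p_j$ to order exactly $\alpha_{ij}-1$; comparing vanishing orders on the two sides of the corresponding subsidiary relation then forces $\mathrm{ord}_{p_j}L\geq 1$.

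The binary form $L$ has degree $d+1$ yet vanishes at the $k>d+1$ distinct points $p_j$, so $L\equiv 0$, whence $L_x=L_y=0$. Substituting into the main identity and cancelling $c_i\not\equiv 0$ gives the polynomial identity $(d+1)k_i=e_i(q_x-p_y)$ on $\lineinfty$ for every $i$. Evaluating at any $x\in\lineinfty$: either $(q_x-p_y)(x)\neq 0$, forcing $\eta=(e_1:\dots:e_r:d+1)$, or $(q_x-p_y)(x)=0$, in which case all components of $\eta$ vanish. The only non-routine step is the derivation of the main identity; everything else is a degree count on $\lineinfty$.
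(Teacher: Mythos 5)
Your proof is correct, but it takes a genuinely different route from the paper's. The paper restricts everything to $\lineinfty$, factors each $C_i|_{z=0}$ into linear forms vanishing at the points $p_j$, and analyzes the Darboux matrix $N$ of these $k$ lines: its kernel is generated by the Euler syzygy and a second syzygy whose entries involve $\Lambda=\prod_{i=1}^{k-1}L_i$ of degree $k-1>d$, so for degree reasons $\omega|_{z=0}=F(x\,\dy-y\,\dx)$ with all line cofactors equal to $F$, whence $K_i|_{z=0}=(\deg C_i)F$ and $d\omega|_{z=0}=(d+1)F\,\dx\dy$. You instead work directly with the binary forms $p,q,c_i,k_i$, introduce the auxiliary form $L=xp+yq$ of degree $d+1$, derive the identity $c_i\bigl[e_i(q_x-p_y)-(d+1)k_i\bigr]=c_{iy}L_x-c_{ix}L_y$ from the restricted cofactor relation and the Euler relations for $c_i$ and $k_i$ (I checked this computation and the order-of-vanishing argument giving $L(p_j)=0$; both are right), and then kill $L$ by root counting since it vanishes at $k>d+1$ points of $\PP^1$. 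Note that $L\equiv 0$ is exactly equivalent to the paper's intermediate conclusion $\omega|_{z=0}=F(x\,\dy-y\,\dx)$ (write $q=xF$, $p=-yF$), so both arguments hinge on the same fact, reached differently. The paper's route buys conceptual continuity with the rest of the article---the restricted statement is again a Darboux-matrix/syzygy statement, parallel to Proposition \ref{pInfinityLineZero}---and it exhibits the cofactors of the individual lines at infinity explicitly. Your route buys self-containedness and economy: you avoid having to justify that the two displayed columns generate the kernel of the $k\times(k+2)$ matrix $N$ (a point the paper treats briefly), using only Euler identities, the product rule, and the bound on roots of a nonzero binary form; moreover you obtain the proportionality $(d+1)k_i=e_i(q_x-p_y)$ as a polynomial identity on all of $\lineinfty$, which is exactly the strength needed for the stated dichotomy at an arbitrary point $x\in\lineinfty$.
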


\begin{proof}
As in the  proof of Proposition \ref{pInfinityLineZero} we have homogeneous linear forms $L_i$ for every point
$P_i$ and write 
\[
	C_i|_{z=0} = \prod_j L_i^{\alpha_{ij}}
\]
the differential form $\omega|_{z=0}$ has, on the one hand, integral curves $C_i|_{z=0}$ with  cofactors $K_i|_{z=0}$, but on the other hand integral curves $L_i$ with cofactors $M_i$.
As before this implies
\[
	K_i|_{z=0} = \sum_j \alpha_{ij} M_j.
\]
We now analyze the $M_j$ in more detail. Consider the Darboux matrix of the
$L_i$:
\[
	N =  \begin{pmatrix}
	L_{1x} & L_{1y} & L_1 &  &   \\
	\vdots & \vdots & & \ddots & \\
	L_{kx} & L_{ky} & & & L_k
	\end{pmatrix}
\]
Notice that $L_{ix}, L_{iy} \in \CC$. Therefore the kernel of  $N$ is generated by
\[
	\begin{pmatrix}
		x & -L_{ky}\Lambda \\
		y & L_{kx}\Lambda \\
		-1 & \left| \begin{smallmatrix} L_{1x} & L_{1y} \\ L_{kx} & L_{ky} \end{smallmatrix} \right|
		        \frac{\Lambda}{L_1}\\
		\vdots & \vdots \\
		-1 & \left| \begin{smallmatrix} 
		                    L_{k-1,x} & L_{k-1,y} \\ L_{kx} & L_{ky} 
		                 \end{smallmatrix} \right|
		        \frac{\Lambda}{L_{k-1}}\\
		-1& 0 \\
	\end{pmatrix}
\]
where $\Lambda = \prod_{i=1}^{k-1}L_i$. The first syzygy is the Euler relation, the second consists of the $r \times r$ minors not involving the last column. 

Now $(Q,-P,-M_1,\dots,-M_k)|_{z=0}$ must be a combination of the above columns. Since
in the second column 
$$\deg L_{ry} \Lambda = k-1 > d = \deg P$$
this column cannot be involved in the linear combination. It follows that
\[
	\omega|_{z=0} = Fx\dy -Fy\dx \quad \text{and} \quad M_i = F
\]
for an appropriate $F$ of degree $d-1$. Substituting this into the formula of $K_i|_{z=0}$
we find:
\[
	K_i|_{z=0} = \Bigl(\sum_j \alpha_{ij}\Bigr) F = (\deg C_i)F.
\]
Furthermore we have
\begin{align*}
	d(\omega) 
	&= d\bigl((Fx\dy -Fy\dx)\bigr)\\
	&= dF\wedge(x\dy-y\dx) + Fd(x\dy-y\dx) \\
	&= (F_x\dx + F_y\dy) \wedge(x\dy-y\dx) + 2F\dx\dy \\
	&=(F_xx + F_yy-2F)\dx\dy \\
	&=(\deg F+2)F \dx\dy \\
	&=(d+1)F \dx\dy
\end{align*}
It follows that for any point $x \in \lineinfty$ either $d\omega(x) = K_1(x) = \dots K_r(x) = 0$, or
\[
	\eta = (K_1(x) : \dots : K_r(x) : d\omega(x)) = (\deg C_1: \dots : \deg C_i : d+1)
\]
\end{proof}

\begin{example} \xlabel{eThreeCuspidalAndLine2}
Consider again a 3-cuspidal quartic curve $C$ and a tangent line $L$ as in Example \ref{eThreeCuspidalAndLine} and in Figure \ref{f98}. In this case, there are $5$ distinct intersection points at infinity. If, in addition, we assume that  $\omega$ has degree $3$ we can apply  Proposition \ref{pManyPointsInfinity} and for a general point $x \in \lineinfty$ we obtain
\[
	(K_L:K_C:d \omega) = (1:4:4).
\]
Together with the values at the cusps and at the tangent point, we obtain the following
matrix of values
\[
	\begin{pmatrix}
	   0 & 6 & 5 \\
	   2 & 2 & 3 \\
	   1 & 4 & 4
         \end{pmatrix}
\]
Observe that $(4,5,-6)^t$ still lies in the kernel of this matrix.. This implies that
\[	
	Q =  4 K_L - 5 K_C - 6 d\omega
\]
vanishes on $\lineinfty$ and at $4$ points outside this line. Since $Q$ is a
degree $2$ polynomial and the $4$ points do not lie on a line, it follows that $Q$ must vanish identically. 
This implies that $\omega$ has a Darboux integrating factor.
\end{example}

%%%%%%%%%%%%%%%%%%%%%%%%%%%%%%%%%%%%%%%%%%%%
\section{Constructions} \xlabel{sConstruction}
%%%%%%%%%%%%%%%%%%%%%%%%%%%%%%%%%%%%%%%%%%%%
\nosubsections

In this section, we construct Darboux-integrable differential forms of degree $d=3$ by using the methods of the previous sections. All of these constructions were found by analyzing finite-field examples obtained using the methods described in \cite{survey} and \cite{Ste:2011}. For computations we use \verb#Macaulay2# \cite{M2} and the scripts at 
\cite{cfGitHub}.

\begin{figure}[h]
\includegraphics*[width=10cm]{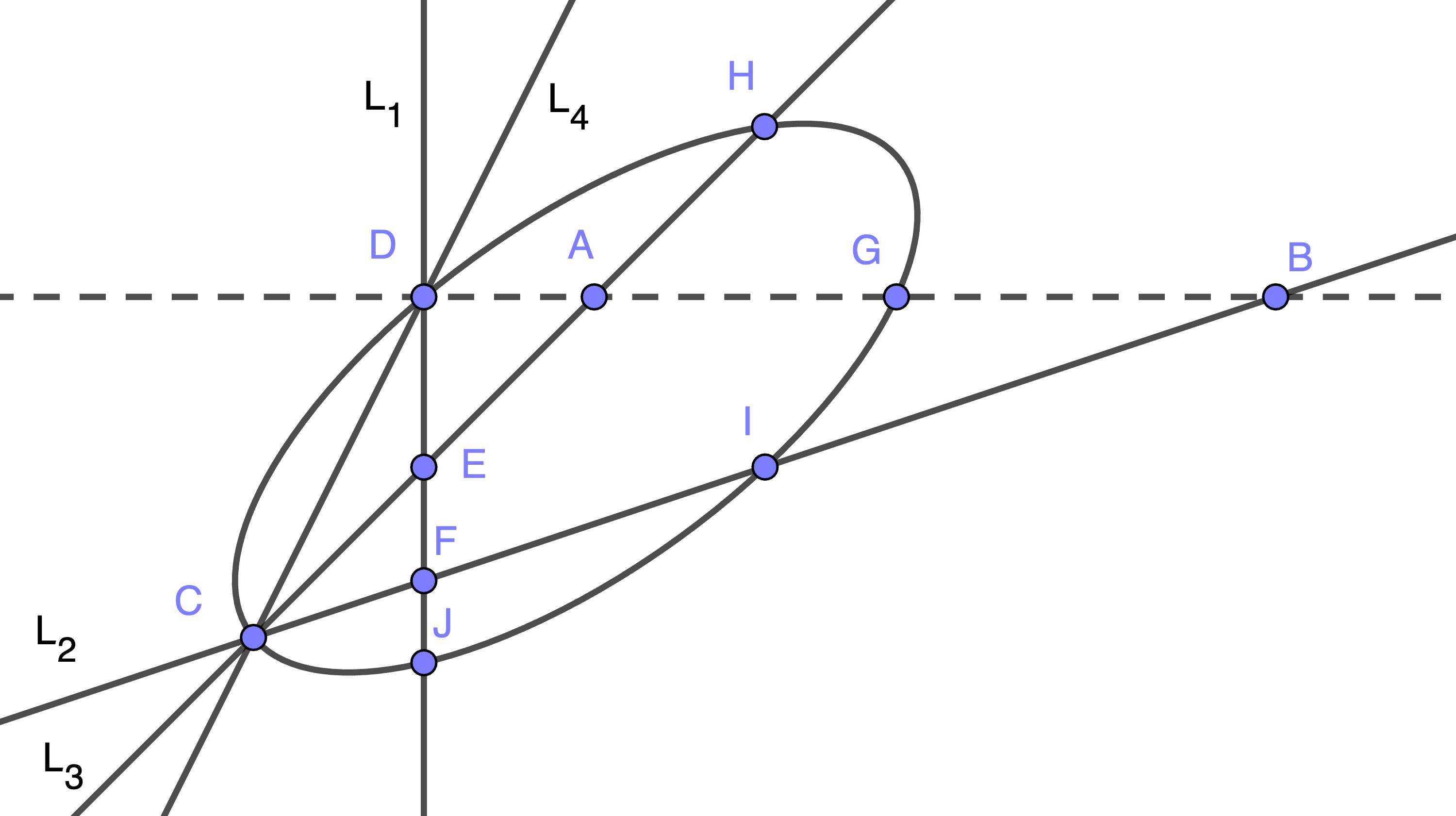}
\caption{Curve configuration for Construction \ref{c96}}
\label{f96}
\end{figure}

\begin{constr} \xlabel{c96}
Let $L_4$ be a line in $\PP^2$, $D$ the point where it intersects $\lineinfty$ and $C$ another
point on $L_4$. Now choose $L_1$ and $L_2$ through $C$ and $L_3$ through $D$. Finally choose a conic $Q$ through $C$ and $D$.  This gives a $2+0+1+1+1+1+3=9$ dimensional family of curve configurations. The union $U$ of all these curves has degree $e = 6$. Outside of $C$ and $D$ it has $5$ more nodes labeled $E, F, H, I, J$ in Figure \ref{f96}. 

The degree $t$ of $X = V(U,U_x,U_y)$ at the special points is:
\begin{center}
\begin{tabular}{|c|c|}
\hline
point & t \\
\hline
$4$-fold point at $C$ & 9 \\
$3$-fold point at $D$ & 6  \\
nodes at $E,F,H, I, J$ & 5 \\
\hline
sum & 20 \\
\hline
\end{tabular}
\end{center}
Therefore the expected dimension of $\sV_U(3)$ is
\begin{align*}
	\expectedDim &= {d-e+3 \choose 2} + {d+1 \choose 2} - (e-1)^2 + \deg X \\
	&= 0+6-5^2+20 = 1
\end{align*}

Let $\omega$ be a differential in $\sV_U(3)$. Consider
the reducible curve $\Gamma =  L_1 \cup L_2 \cup L_3 \cup Q$. By Lemma \ref{lUnionOfIntegralCurves} it
is also an integral curve of $\omega$. Let $K_{L_4}$ and $K_\Gamma$ be the cofactors of
the respective curves. Evaluating at the $4$-fold point $C$ and at the nodes, we obtain the
following matrix of values for $(K_{L_4}:K_\Gamma:d\omega)$
\[
	\begin{pmatrix}
      0 &       1 &	    1\\
      1 &       3 &      2\\
      \end{pmatrix}
      	\begin{tabular}{c}
      (at nodes) \\
      (at $C$)\\
      \end{tabular}
\]
of which some rows may possibly be replaced by zeros.

The kernel of this matrix contains  $(1,-1,1)^t$, so 
\[
	K_{L_4}-K_\Gamma + d\omega
\]
vanishes at $C,E,F,H,I,J$. If these points are in general position with respect to
conics, this implies that 
\[
	K_{L_4}-K_\Gamma + d\omega = 0
\]
as polynomials. By Darboux's Theorem this shows then, that $\omega$ has a rational integrating
factor. 

It is now easy to check that 
\begin{align*}
	\omega = 
	& (-40 x^{2} y-36 x y^{2}-4 y^{3}-80 x^{2}-102 x y-34
      y^{2}-60 x-72 y-40)dx\\
	& +(24 x^{3}+16 x^{2} y+2 x^{2}+12 x y-9 x+2 y-2)dy
\end{align*}
has integral curves 
\begin{align*}
	L_1 &= y+2\\
	L_2 &= 4x+y+4\\
	L_3 &= 4x+1\\
	L_4 &= 2x+1\\
	Q &= 2x^2+2xy+x+2y+2
\end{align*}
in the above configuration, that $\deg X = 20$, that $\dim \sV_U(3) = 1 = \delta$, and that
the points $C,E,F,H,I,J$ do not lie on a conic. Furthermore, one verifies that $\omega$ has only finitely many integral conics (see our script \verb#example_9_6.m2# at \cite{cfGitHub}).

It follows that there exists a $9$-dimensional family of degree $3$ differential 
forms with a rational integrating factor. Over a finite field
one can also check, that the tangent space to the center variety at this point has dimension at most $9$ by computing the derivatives of the first $10$ focal polynomials of $\omega$. This proves that the constructed family is a component of
the center variety.

Differential forms constructed in this way lie in Steiner’s ideal $9.6$ \cite{Ste:2011}.

It seems to us that this component is new.
 \end{constr}

\begin{figure}
\includegraphics*[scale=0.4]{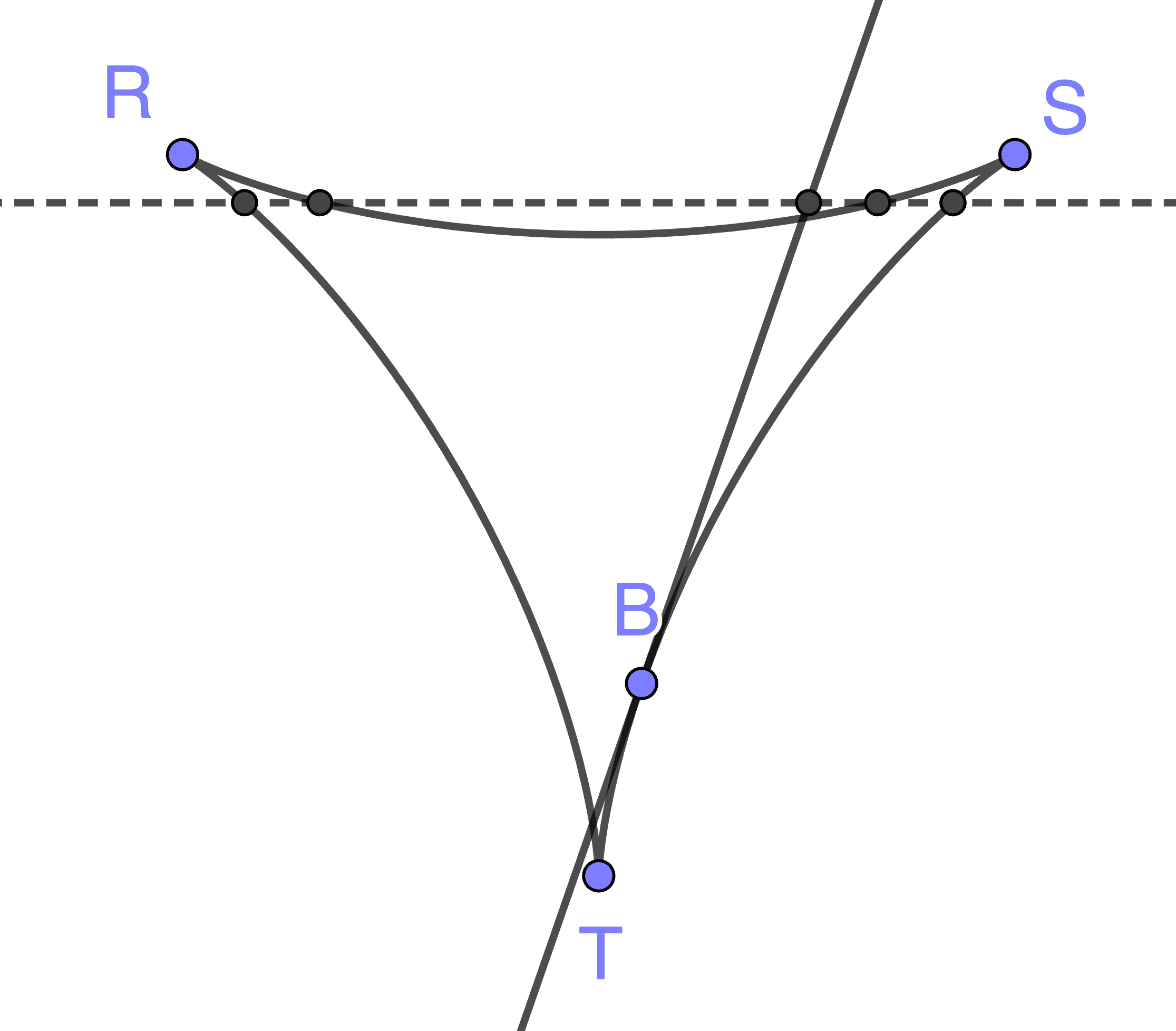}
\caption{Curve configuration for Construction \ref{c98}}
\label{f98}
\end{figure}

\begin{constr} \xlabel{c98}
Let $C$ be a 3-cuspidal curve with nodes labeled $R, S, T$, and let $L$ be a line tangent to $C$ at a point $B$ as considered in Examples \ref{eThreeCuspidalAndLine} and \ref{eThreeCuspidalAndLine2} and Figure \ref{f98}. By 
the following dimension count we, obtain at least a $9$ dimensional family of
such configurations.
\begin{center}
\begin{tabular}{|c|c|}
\hline
plane quartic  & 14 \\
3 cusps            & -6 \\
point B             &   1 \\
tangent to B    &  0\\
\hline
sum & 9\\
\hline
\end{tabular}
\end{center}

The union $U = C \cup L$ of the two curves has degree $e=5$. The degree of $X = V(U,U_x,U_y)$ is $6+3+2=11$ since we have three cusps of $C$, a tacnode at $B$ and
two nodes at the other intersection points.

Therefore, by Propsiiton \ref{pExpected}, the expected dimension of $V_U(3)$ is
\[
	\expectedDim = 0 + 6 - 4^2 + 11 = 1
\]
Let $\omega$ be a differential in $\sV_U(3)$ and
$K_C, K_L$ the corresponding cofactors. We can predict the values of
$(K_L:K_C:d\omega)$ at the special points by Proposition \ref{pSingularPoints}. Since we have
$5$ points on $\lineinfty$ we can also apply Proposition \ref{pManyPointsInfinity} for general points on $\lineinfty$:
\[
	\begin{pmatrix}
      0 &       6 &	    {5}\\
      2 &       2 &       {3}\\
      1 &       4 &       {4}\\
      \end{pmatrix}
      	\begin{tabular}{c}
      (at cusps) \\
      (at $B$)\\
      (at infinity)\\
      \end{tabular}
\]
where some row may be replaced by zero vectors. Since $(4,5,-6)^t$ lies in the kernel of this matrix
\[
	4K_L+5K_C-6d\omega
\]
vanishes at $\lineinfty$ and at the points $R,S,T,B$. If the points $R,S,T,B$ do not lie on a line,
this implies that 
\[
	4K_L+5K_C-6d\omega = 0
\]
as polynomials, and therefore $\omega$ has a rational Darboux integrating factor. The genericity conditions can be verified, for example, for
\begin{align*}
	\omega = &(-11 x^{2} y+3 x y^{2}+y^{3}-2 x^{2}+x y+5 y^{2}-2 x+4 y)dx \\
      &+(11 x^{3}-3 x^{2} y-x y^{2}+10 x^{2}-4 x y-y^{2}-x-y)dy
\end{align*}
with integral curves
\begin{align*}
L &= -7x+2y+1 \\
C &=  x^2y^2-2x^2yw-2xy^2w+x^2w^2-2xyw^2+y^2w^2
\end{align*}
where $w = x+y+1$. This proves that this construction describes a $9$-dimensional component of degree $3$ differential forms with a center. Differential forms constructed in this way lie in Johannes Steiner's experimental ideal 9.8. 

Experimentally, we also observe that differential forms as constructed above automatically contain further integral curves of degree $2$ and $3$ and infinitely many integral curves of degree $6$.

It seems to us that this component is new.

\end{constr}

\begin{figure}
\includegraphics*[scale=0.4]{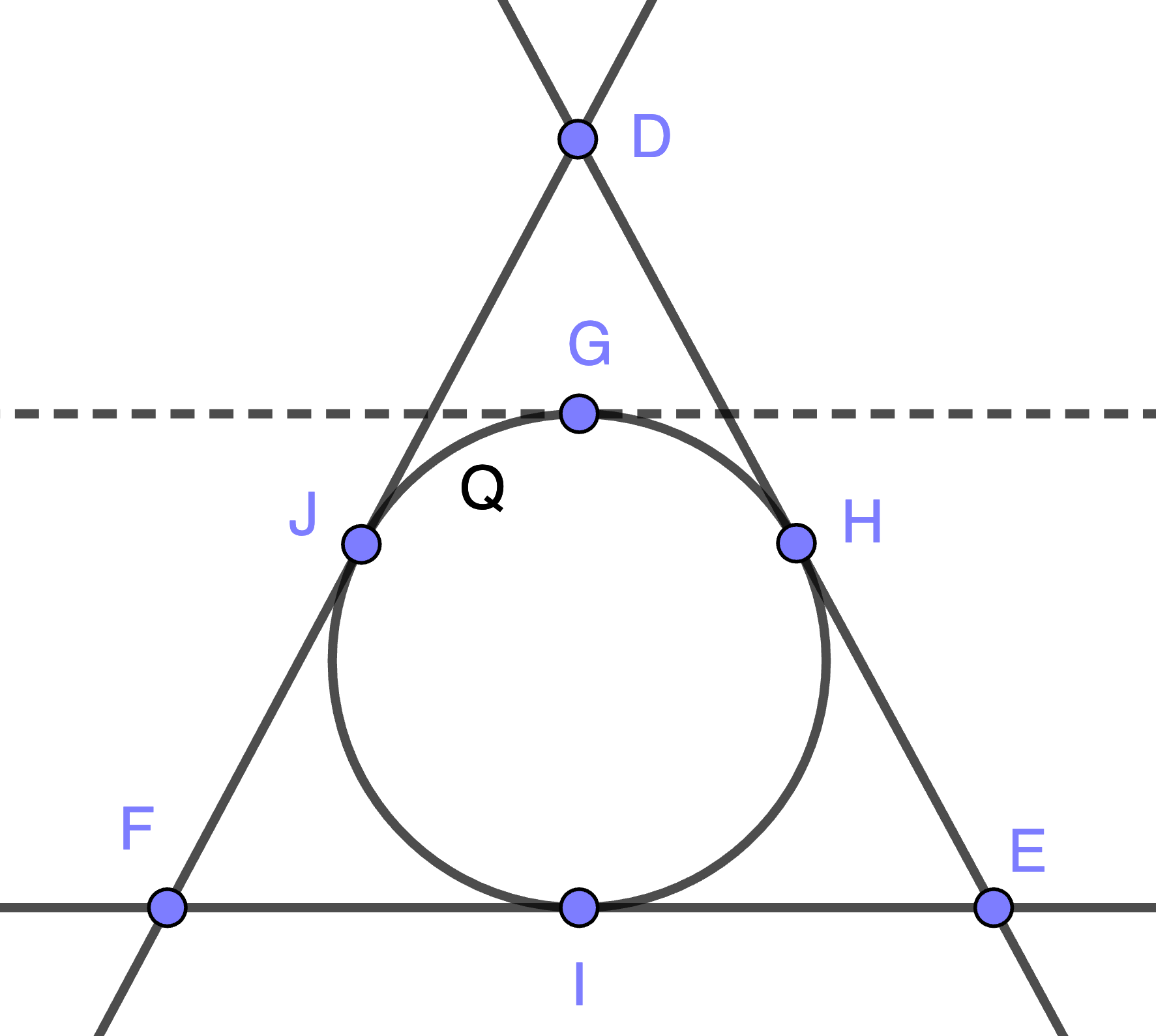}
\caption{Curve configuration for Construction \ref{c99}}
\label{f99}
\end{figure}

\begin{constr} \xlabel{c99}
Choose a point $G$ on $\lineinfty$, and $3$ general points $H,I,J$ in the affine
plane. Let $Q$ be a conic tangent to the line at infinity at $G$, and passing through $H,I,J$. Let
$T_H$, $T_I$ and $T_J$ be the tangent lines to $Q$ in the respective points and $D,E,F$ the intersection points of the tangent lines as shown in Figure \ref{f99}. By the following 
dimension count, we obtain a $7$ dimensional family of such configurations

\begin{center}
\begin{tabular}{|c|c|}
\hline
point G         & 1 \\
points H,I,J  &  6 \\
quadric Q    &   0 \\
\hline
sum & 7\\
\hline
\end{tabular}
\end{center}

Let $U = T_H \cup T_I \cup T_J \cup Q$, $e = \deg U = 5$.
The degree of $X = V(U,U_x,U_y)$ at the special points is:
\begin{center}
\begin{tabular}{|c|c|c|}
\hline
point & $t$ / $t_z$\\
\hline
tangent to $\lineinfty$ at $C$ & 1 \\
tacnodes at $H$,$I$,$J$ & 9  \\
nodes at $E,F,G$ & 3  \\
\hline
sum & 13 \\
\hline
\end{tabular}
\end{center}
Therefore, by Proposition \ref{pExpected}, the expected dimension of $\sV_U(3)$ is
\begin{align*}
	\expectedDim &= 0 + 6 - 4^2 + 13 = 3.
\end{align*}

Let $\omega$ be a differential form in $\sV_U(3)$. Consider
the triangle $T =  T_H \cup T_I \cup T_J$, the inscribed conic $Q$, and their
respective cofactors $K_Q$ and $K_T$. Evaluating at the special points we obtain:
\[
	\begin{pmatrix}
      0 &       1 &	    {1}\\
      2 &       2 &       {3}\\
      \end{pmatrix}
      	\begin{tabular}{c}
      (at nodes) \\
      (at tangent points)\\
      \end{tabular}
\]
where some rows may be replaced by zeros. Since $(1,2,-2)^t$ lies in the kernel of this matrix
\[
	K_Q+2K_T-2d\omega
\]
vanishes at both the tangent points and the nodes. If these $6$ points do not lie on a conic,
this shows that 
\[
	K_Q+2K_T-2d\omega = 0
\]
as polynomials and that $\omega$ has a rational Darboux integrating factor. The  genericity conditions can be verified, for example, with
\begin{align*}
	\omega = &(-2 x y^{2}-2 y^{3}+2 x y+2 y)dx \\
		     &+ (4 x^{3}+2 x^{2} y+xy^{2}-4 x y-y^{2}-x-y)dy
\end{align*}
Since we have a $7$ dimensional family of configurations and a $3$ dimensional space of syzygies for each,
this proves that this construction describes a $7+2=9$ dimensional component of degree $3$ differential forms with a center. Differential forms constructed in this way lie in Johannes Steiner's ideal 9.9. We believe that this component is also new.
\end{constr}

\begin{figure}
\includegraphics*[scale=0.4]{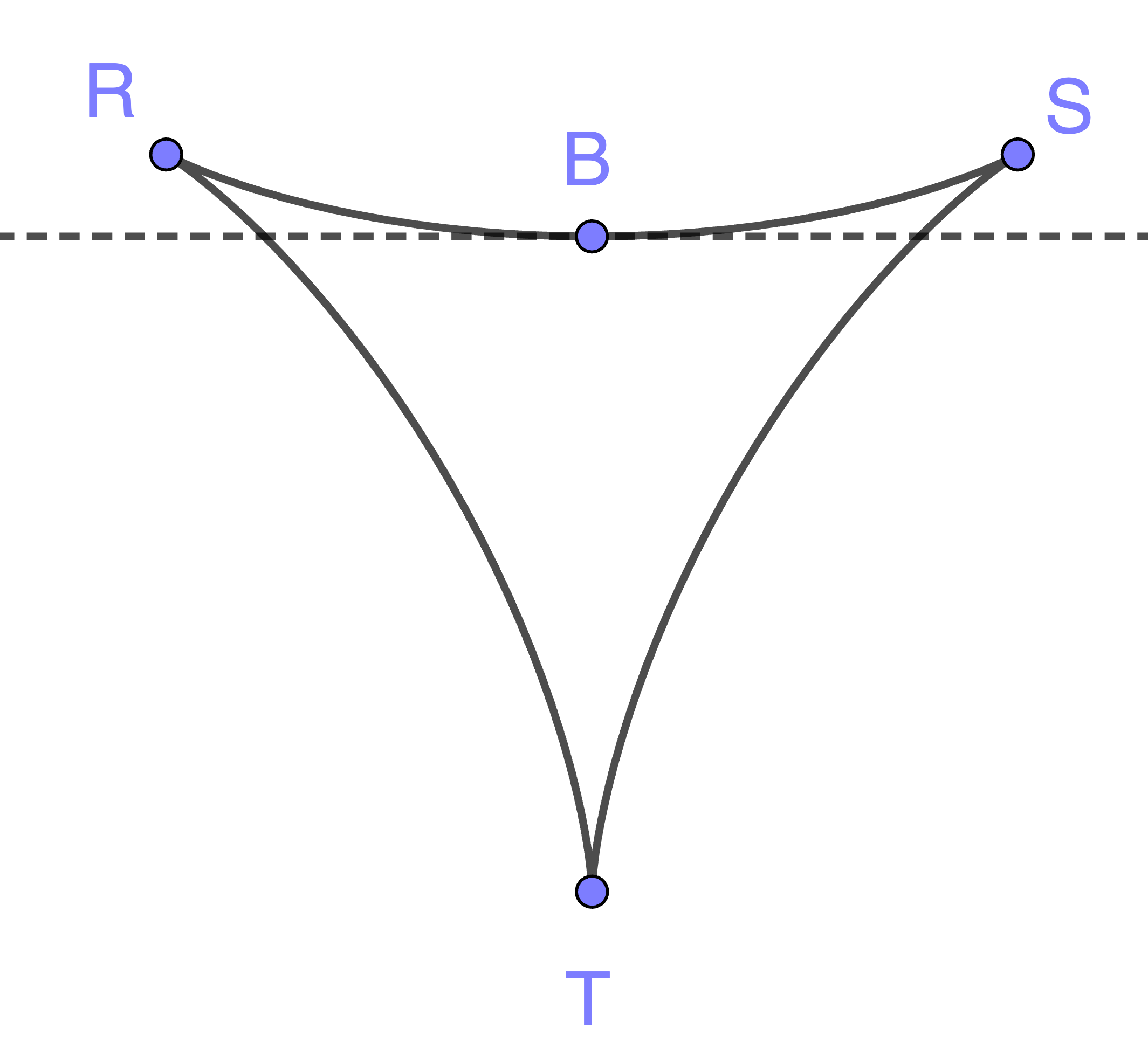}
\caption{Curve configuration for Construction \ref{c910}}
\label{f910}
\end{figure}

\begin{constr} \xlabel{c910}
Consider a $3$-cuspidal quartic curve tangent to $\lineinfty$. Label the cusps with $R,S,T$ and the tangent point with $B$ as in Figure \ref{f910}. There is a $14-3\cdot2-1=7$ dimensional family of such configurations.
The degree of $X$ defined by $(C,C_x,C_y)$ at the special points is

\begin{center}
\begin{tabular}{|c|c|}
\hline
point & t \\
\hline
3 cusps & 6 \\
at $B$ & 1  \\
\hline
sum & 7 \\
\hline
\end{tabular}
\end{center}

Therefore, the dimension of  $V_C(2)$ is 
\begin{align*}
	\expectedDim &= 0 + 3 - 3^2 + 7 = 1
\end{align*}
Let $\omega$ be a differential form in $\sV_C(2)$ and
let $K$ be the cofactor of $C$. Evaluating $(K:d \omega)$ at the cusps yields $(6:5)$. 
Hence, $5K-6d\omega$ vanishes at these points. Now $K$ and $d\omega$ are linear forms
and so $5K-6d\omega = 0$ as polynomials if the cusps do not lie on a line. 
In this case, $\omega$ has a rational Darboux integrating factor $\mu$. 

This family is already known. Differential forms of this family automatically have further integral
curves of degree $2$ and $3$ in special position and coincide with the well-known component of codimension $3$ of degree $2$ differential forms. At most our technique gives a new proof of 
Darboux integrability for differential forms of this type. 

Now consider a general linear form $L$, and the differential form $L\omega$. It will be of
degree $3$ and has an integrating factor $\frac{\mu}{L}$. Since the genericity conditions can be verified for
\begin{align*}
	\omega =  (x+1)\bigl( &(-x^{2}+20 x y+5 y^{2}+x-5 y)dx \\
	&+ (-5 x^{2}-20 x y+y^{2}+5
      x-y)dy\bigr)
\end{align*}
with integral curve
\begin{align*}
C &=  x^2y^2-2x^2yw-2xy^2w+x^2w^2-2xyw^2+y^2w^2
\end{align*}
and $w =  (1/8)(x+y-z)$,
we have a $7$-dimensional family of configurations and a $3$ dimensional family
of degree $3$ syzygies that give rise to integrable degree $3$ differential forms. 
This gives again a $7+2=9$ dimensional component of the center variety.

Differential forms constructed in this way lie in Johannes Steiner's ideal 9.10. 

This component does not appear in \zoladeks lists, but possibly it was left out
because the construction is a trivial extension of a well known degree $2$ component.
 
\end{constr}

%\begin{figure}
%\includegraphics*[width=4cm]{sketch_9_13.png}
%\caption{Configuration 9.13}
%\xlabel{f913}
%\end{figure}

\begin{figure}
\includegraphics*[scale=0.4]{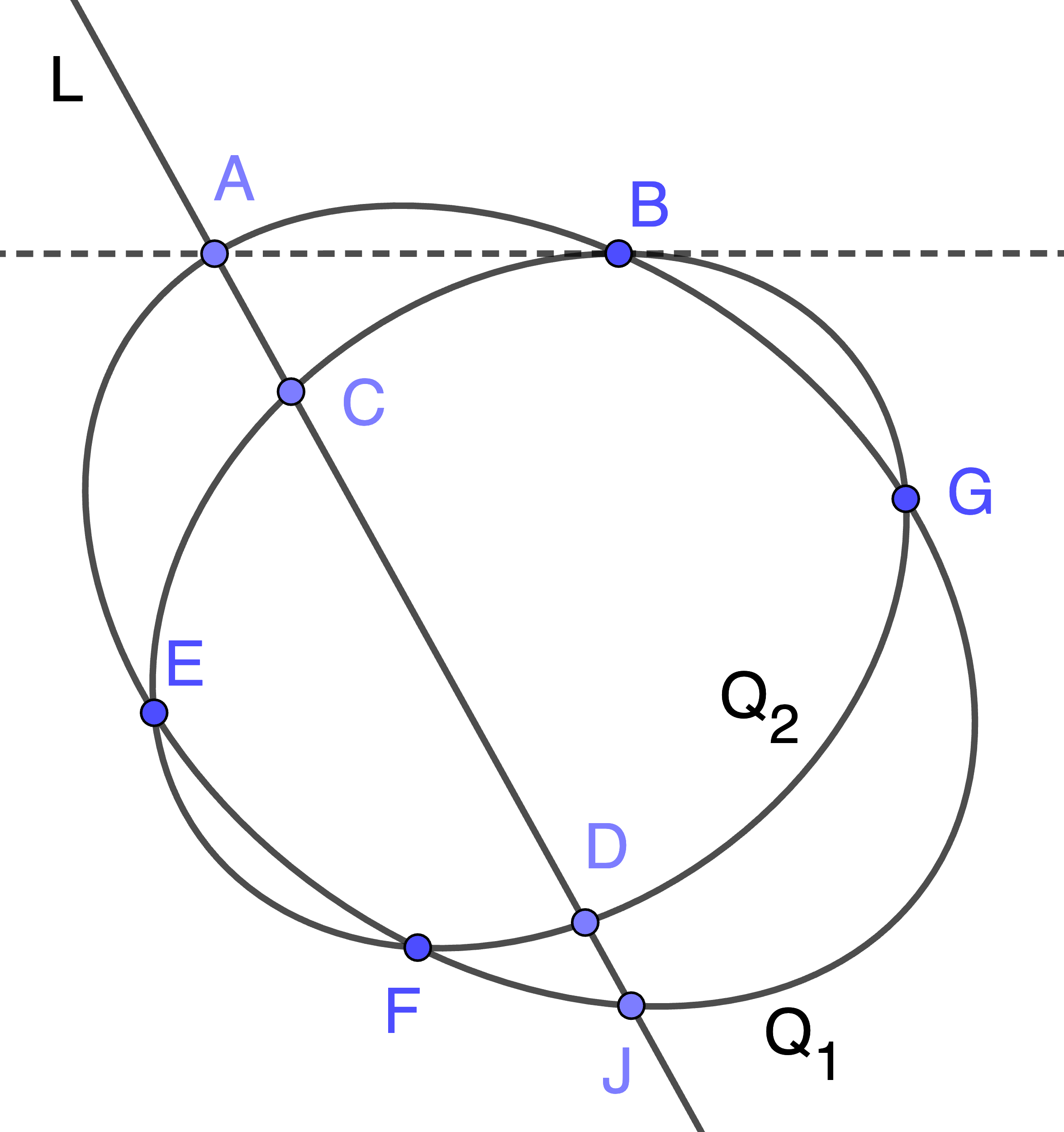}
\caption{Curve configuration used in Construction \ref{c914}}
\label{f914}
\end{figure}

\begin{constr} \xlabel{c914}
Consider a conic $Q_1$ that intersects $\lineinfty$ in points $A$ and $B$.
Choose a second conic $Q_2$ tangent to $\lineinfty$ at $B$. Finally,
choose a line $L$ through $A$ as in Figure \ref{f914}. By 
the following dimension count we have at least a $9$ dimensional family of
such configurations.
\begin{center}
\begin{tabular}{|c|c|}
\hline
conic $Q_1$    	& 5 \\
conic $Q_2$ 	& 3 \\
line L         	& 1 \\
\hline
sum & 9\\
\hline
\end{tabular}
\end{center}

Let $U = Q_1 \cup Q_2 \cup L$ be the union of all curves. It has degree $e =5$.  The degree of 
$X=V(U,U_x,U_y)$ at the special points is

\begin{center}
\begin{tabular}{|c|c|}
\hline
point & $t$ / $t_z$ \\
\hline
$2$-fold point at $A$ & $2$  \\
$2$-fold point with tangent at $B$ & $3$  \\
$6$ nodes & $6$ \\
\hline
sum & 11 \\
\hline
\end{tabular}
\end{center}

Therefore, by Proposition \ref{pExpected}, the expected dimension of $\sV_U(3)$ is
\[
	\expectedDim = 0 + 6 - 4^2 + 11 = 1.
\]
Let now $\omega$ be a differential form in $\sV_U(3)$ and
$K_U$ the cofactor of $U$. The value of  $(K_U:d\omega)$ at the $6$ nodes is $(1:1)$ and therefore 
$K_U-d\omega$ vanishes at all nodes. If the nodes do not lie on a conic this implies
$K_U = d\omega$ and $\omega$ has a rational Darboux integrating factor.

The genericity conditions are satisfied by
\begin{align*}
	\omega = &(-68 x^{3}-168 x^{2} y-216 x y^{2}-64 y^{3}-41 x^{2}-98 x y-16 y^{2}+57 x+46 y)dx \\
      			&+ (-52 x^{3}+32 x^{2} y+32 x y^{2}+20x^{2}+24 x y+80 x+24 y)dy
\end{align*}
with integral curves
\begin{align*}
	L &= x+2y \\
	Q_1 &= 2x^2+4xy+x+1\\
	Q_2  &= 15x^2-19x-4y.
\end{align*}
We obtain a $9$-dimensional component of the center variety in degree $3$. This component is not completely new: it contains \zoladeks family $CD_{28}$ as a subset of codimension $1$. The corresponding differential forms lie in Johannes Steiner’s ideal 9.14.

Experimentally, differential forms from this construction also have another algebraic integral curve of degree $3$ and infinitely many algebraic integral curves of degree $4$.
%{\bf (mention other automatic integral curves)}
\end{constr}

Table \ref{tOverview} provides an overview of the current situation in codimension $9$ according to our knowledge. All experimental data used in this study is based on integral curves of degree at most 6, as computed by Johannes Steiner  \cite{Ste:2011}. In several cases (i.e., ideals 9.3, 9.7 and 9.11--9.13), the curves detected within this degree bound were insufficient to construct a rational integrating factor or to verify Darboux integrability. These gaps in the table therefore reflect limitations of the available data and indicate directions for future investigation, possibly involving higher-degree curves.

%\problem{mention that heuristically there are components that are not Darboux integrable}

\begin{table}
\begin{tabular}{|c|c|c|}
\hline
Steiner's experimental ideals & \zoladeks families & This paper \\
\hline
9.1 & $CR_9$ &\\
9.2 & $CD_8$ &\\
9.3 &  &\\
9.4 & $CR_{15}$ &\\
9.5 & $CD_{21}$, $CR_{10}$  &\\
9.6 & & Construction \ref{c96} \\
9.7 & &\\
9.8 &  & Construction \ref{c98} \\
9.9 & & Construction \ref{c99} \\
9.10 &  & Construction \ref{c910} \\
9.11 & $(CD_{30})$  &\\
9.12 &  &\\
9.13 &  &\\
9.14 & $(CD_{28})$  & Construction \ref{c914}\\
\hline
\end{tabular}

\caption{Overview of codimension $9$ components. Numbers in parentheses indicate families that lie within the given component but do not span it entirely.}
\label{tOverview}

\end{table}

%\bibliographystyle{alpha}
%\bibliography{../../../../bib-bibdesk}

%\end{document}

\def\cprime{$'$} \def\cprime{$'$}

\end{document}